\newcommand\sC{{\mathcal C}}
\newcommand\sB{{\mathcal B}}
\newcommand\Ga{\Gamma}
\newcommand\ga{\gamma}
\def\Bbb{\bf}
\newcommand{\CC}{\ensuremath{\mathbb{C}}}
\newcommand{\ZZ}{\ensuremath{\mathbb{Z}}}
\newcommand{\QQ}{\ensuremath{\mathbb{Q}}}
\newcommand{\NN}{\ensuremath{\mathbb{N}}}
\newcommand{\HH}{\ensuremath{\mathbb{H}}}
\newcommand{\ra}{\ensuremath{\rightarrow}}
\def\eea{\end{eqnarray*}}
\def\bea{\begin{eqnarray*}}
\newcommand\dual{\mathrel{\raise3pt\hbox{$\underline{\mathrm{\thinspace d
\thinspace}}$}}}
\newcommand\qe{\ifhmode\unskip\nobreak\fi\quad $\Box$}
\def\BOX{\hfill\lower.5\baselineskip\hbox{$\Box$}}
\newtheorem{theo}{Theorem}[section]
\newtheorem{remarkk}[theo]{Remark}
\newenvironment{rem}{\begin{remarkk}\rm}{\end{remarkk}}
\newtheorem{defin}[theo]{Definition}
\newenvironment{definition}{\begin{defin}\rm}{\end{defin}}
\newtheorem{prop}[theo] {Proposition}
\newtheorem{lemma}[theo]{Lemma}
\newtheorem{example}[theo]{Example}
\renewcommand{\a}{\alpha}
\renewcommand{\b}{\beta}
\newcommand{\e}{\varepsilon}
\newcommand{\z}{\zeta}
\newcommand{\x}{\xi}
\newcommand{\s}{\sigma}
\newcommand{\inv}{^{^{-1}}}
\newcommand{\cutoff}[1]{}
\DeclareMathOperator{\Aut}{Aut}
\begin{document}

%\title[Stable moduli spaces of curves with symmetries]{The stable components of moduli spaces of curves with symmetries}
\title[Genus stabilization for moduli of curves with  symmetries]
{Genus stabilization for the components of moduli spaces 
of curves with symmetries}
\author{Fabrizio Catanese, Michael L\"onne, Fabio Perroni}

\thanks{The present work took place in the realm of the DFG
Forschergruppe 790 "Classification of algebraic
surfaces and compact complex manifolds".
}

\date{\today}

\maketitle

\begin{abstract}

In a previous paper, \cite{CLP12}, we introduced a new homological invariant  $\e$ for the faithful action of a
 finite group $G$ on an algebraic curve.

We show here  that the moduli space of curves admitting a faithful 
action of a finite group $G$  with a fixed  homological invariant $\e$, if the genus $g'$ of  the quotient curve satisfies 
 $g'>>0$, is irreducible (and non empty iff the class satisfies the `admissibility' condition).
 
We achieve this by showing that   the stable classes are in 
bijection with the set of admissible classes  $\e$.

\end{abstract}

%%%%%%%%%%%%%%%%%%%%%
%%%%%%%%%%%%%%%%%%%%%%

\section{Introduction}

The main purpose of this article is the determination of the irreducible components of 
the moduli spaces of curves admitting a given  symmetry group $G$  and  to set up the stage
for the investigation of their  homological stabilization.

Let us first  introduce  the framework into  which the stabilization theorem proven in this paper
 fits.

Let $g>1$ be a positive integer, and let $C$ be a projective curve of genus $g$.  Consider a subgroup $G$ of the group of automorphisms
of $C$, and let   $\ga \in G \leq {\rm Aut}(C)$: then, 
since $C$ is a $K (\Pi_g, 1)$ space, the homotopy class of
$
\ga \colon C \to C \, 
$
 is determined by 
$$
\pi_1(\ga) \colon \pi_1(C,y_0) \to \pi_1(C, \ga \cdot y_0) \, 
$$
and actually  the topological type of the action is completely 
determined by its action on the fundamental group, up to inner 
automorphisms.
Hence we get a group homomorphism
$$
\rho \colon G \to \frac{{\rm Aut}(\Pi_g)}{{\rm Inn}(\Pi_g)}={\rm Out}(\Pi_g) \, .
$$

This homomorphism $\rho$ is injective by  Lefschetz'  lemma 
$
( \ga \underset{h}{\sim} id_C \Rightarrow \ga =id_C \, )
$
asserting that, for $ g \geq 2$, only the identity transformation  is homotopic to the identity.
Moreover, since holomorphic maps are orientation preserving, we have actually   
$$\rho \colon G \to {\rm Out}^+(\Pi_g)={\rm Map}_g \cong \frac{{\rm Diff}^+(C)}{{\rm Diff}^0(C)},$$
hence indeed $\rho$ determines the differentiable type of the action.

When we speak of the moduli space of curves admitting an action by a finite group $G$ of a fixed  topological type 
we fix  $\rho(G)$ up to  the action of  ${\rm Aut}(G)$ on the source and the adjoint action of  
the mapping class group ${\rm Map}_g$
on the target.
In order to be more precise, let us recall the description of the 
moduli spaces of curves via Teichm\"uller theory.

Fix $M$ to be the underlying oriented differentiable manifold of $C$, 
and let $\mathcal{C}S(M)$ the space of complex structures on $M$:
then  the moduli space of curves
$\mathfrak{M}_g $, and Teichm\"uller space $\mathcal{T}_g $ are defined as:

$$
\mathfrak{M}_g := \mathcal{C}S(M)/{\rm Diff}^+(M), \ 
\mathcal{T}_g := \mathcal{C}S(M)/{\rm Diff}^0(M), \ 
\Rightarrow \mathfrak{M}_g = \mathcal{T}_g/ {\rm Map}_g.
$$

Teichm\"uller's theorem says that $\mathcal{T}_g \subset \CC^{3g-3}$ is an open subset homeomorphic to a ball. 
Moreover ${\rm Map}_g$ acts properly discontinuously on $\mathcal{T}_g$, but not freely.

As a corollary, the rational cohomology of the moduli space is calculated by group cohomology:
$
H^*(\mathfrak{M}_g, \QQ) \cong H^*({\rm Map}_g, \QQ) \, .
$

Harer (\cite{harer}) showed that these cohomology groups stabilize with $g$,
while the ring structure of the cohomology of the ``stable mapping class group'' is described by a conjecture of Mumford,
 proven by Madsen and Weiss  (\cite{m-w}).

\begin{defin}
$
\mathcal{T}_{g,\rho} := \mathcal{T}_g^{\rho(G)}
$
is the fixed-point locus of $\rho(G)$ in $\mathcal{T}_g$.
\end{defin}

The first author (\cite{FabIso})   proved an analogue of Teichm\"uller's theorem,   that
$\mathcal{T}_{g,\rho}$ is homeomorphic to a ball. Hence we get other
subvarieties of the moduli space 
according to the following definition:

\begin{defin}
$\mathfrak M_{g,\rho}$ is the (irreducible closed subvariety)  image of $\mathcal{T}_{g,\rho}$ in $\mathfrak{M}_g$.
That is, $\mathfrak M_{g,\rho}$  is the quotient of $\mathcal{T}_{g,\rho}$ by the normalizer
of $\rho(G)\leq {\rm Map}_g$.

Instead the marked moduli space $\mathfrak M\mathfrak M_{g,\rho}$ is the quotient of $\mathcal{T}_{g,\rho}$ by the centralizer
of $\rho(G)\leq {\rm Map}_g$ (here the action of $G$ is given, together with a marking of $G$, and we do not allow 
 to change the given action of $G$ up to an automorphism
in $Aut (G)$.

\end{defin}

The above description of the space of curves admitting a given topological action of a group $G$ is quite nice,
but not completely explicit. One can make everything more explicit via a more precise 
understanding through geometry: in this way one can find discrete invariants of the topological type of the action.

Consider the quotient curve  $C':=C/G$, let $g':=g(C')$ be  its genus ,  let $\sB=\{ y_1, \ldots , y_d\}$ be the branch locus, and 
let $m_i$  be the multiplicity
of $y_i$. \\

$C\to C'$ is determined (by virtue of Riemann's existence theorem)  by the monodromy:
$
\mu \colon \pi_1(C'\setminus \sB, y_0) \to G \, .
$
We have:

\begin{small}
$$ \pi_1(C' \setminus \sB, y_0) \cong \Pi_{g', d}:= \langle \ga_1, \ldots , \ga_d, \a_1, \b_1 , \ldots , \a_{g'}, \b_{g'} | \prod_{i=1}^d \ga_i \prod_{j=1}^{g'} [\a_j, \b_j]=1 \rangle .$$
\end{small}

\noindent
Therefore the datum of $\mu $ is equivalent to the datum of a Hurwitz generating system, i.e. of a vector 
$$
v : = (c_1, \ldots , c_d, a_1, b_1, \ldots , a_{g'}, b_{g'}) \in G^{d+2g'}
$$
s.t. 

\begin{enumerate}
\item
$G$ is generated by the entries $c_1, \ldots , c_d, a_1, b_1, \ldots , a_{g'}, b_{g'}$,
\item
$c_i \not= 1_G$, $\forall i$, and 
\item
$$\prod_{i=1}^d c_i \prod_{j=1}^{g'}[a_j,b_j]=1.$$
\end{enumerate}

Riemann's existence theorem shows that  the irreducible 
components of the moduli space corresponding  to
curves with a given group $G$ of automorphisms 
(i.e., $G$ acts faithfully, or, equivalently, 
the monodromy $\mu$ is surjective!) 
are determined by the possible topological types 
of the branched covering $ C \ra C' = C / G$, which
are in bijection with the points of the
orbit space
$$
{\rm Epi}(\Pi_{g',d}, G)/ ({\rm Map}_{g',d}\times {\rm Aut}(G)) \, 
$$
(here `Epi' stands for `Set of Epimorphisms').
 \smallskip
 
\noindent
{\bf Relation of the two approaches through the orbifold fundamental group $\pi_1^{\rm orb}$.}

Let $X$ be a  ``good'' topological space and let $\tilde{X}$ be the universal cover of $X$.
Assume that $G$ is a finite group acting effectively  on $X$.
Then $G$ can be lifted to a discontinuous group $\tilde{G}$
of homeomorphisms of $\tilde{X}$ in such a way that the quotients $\tilde{X}/\tilde{G}$
and $X/G$ are homeomorphic. $\tilde{G}$ is called the 
{ \bf orbifold fundamental group} of the $G$-action and it is an extension of $\pi_1(X)$
by $G$, i.e., we have a short exact sequence
\begin{small}
$$
1 \to \pi_1(X) \to \tilde{G} = :  \pi_1^{\rm orb}(X/G)  \to G \to 1 \, .
$$
\end{small}
In our situation, $X=C$, and its universal cover is the upper half plane $\tilde{X}=\HH$, hence $ \pi_1^{\rm orb}(X/G)$
is a Fuchsian group. 

We have:
$$
C=\HH/\Pi_g\, , \qquad C'=\HH/\pi_1^{\rm orb}, \, \qquad
\pi_1^{\rm orb} =\Pi_{g',d}/\langle \langle \ga_1^{m_1}, \ldots , \ga_{d}^{m_d} \rangle \rangle \, .
$$
where, as usual, $\langle \langle a_1, \ldots , a_{d} \rangle \rangle $
denotes the subgroup normally generated by the elements $a_1, \ldots , a_{d}$.

The above exact sequence yields, via conjugation acting on the normal subgroup  
$ \pi_1(X) \cong \Pi_g$, a homomorphism  $\rho \colon G \ra Out (\Pi_g)$.

Fix now $d$ points $y_1, \dots , y_d$ on the oriented differentiable
manifold $M'$ underlying $C'$, and set $\sB : = \{  y_1, \dots , y_d\}$.
One can then define  the d-marked Teichm\"uller space as the quotient 
$$
\mathcal{T}_{g',d} := \mathcal{C}S(M')/{\rm Diff}^0(M', \sB). 
$$
We have thus a covering map
of connected spaces
$\mathcal{T}_{g',d} \to \mathcal{T}_{g,\rho} \, ;$
since $\mathcal{T}_{g,\rho}$ is a ball  the above map is a homeomorphism 
(hence  in particular also 
 the topological type $\rho$ determines the monodromy $\mu$, as can  be proven directly).

We can now describe the numerical  and the  homological invariants of $\rho$ (equivalently, of $\mu$ or of  $v$).

The first numerical invariant is called (by different authors)  the {\bf branching numerical function}/Nielsen class/$\nu$-type.

\begin{defin}
Let ${\rm Conj}(G) $ be the set of (nontrivial) conjugacy classes in the group $G$: then we  let 
 $\nu \colon {\rm Conj}(G) \to \NN$, $\nu (\mathcal{C}):= | \{ i | c_i \in \mathcal{C}\}|$, be the function
 which counts how many  local monodromy elements (these are defined only up to conjugation)
 are in a fixed conjugacy class.
 
 \end{defin}
 
This invariant was first introduced  by Nielsen (\cite{nielsen}) who proved that 
$\nu$ determines $\rho$ if $G$ is cyclic.

\bigskip

The (semi-)classical homological invariant is instead defined as follows.

Let $H:=\langle \langle c_1, \ldots , c_d \rangle \rangle$ be the subgroup normally generated by the local monodromies
(local generators of the isotropy subgroups). Consider  the quotient group $G'' := G/H$.

Then the covering $C\to C'=C/G$ factors through $C''=C/H \to C'$, which is unramified and with group $G''$.
The monodromy $\mu'' \colon \Pi_{g'} \to G''$ corresponds to a homotopy class of a continuous map 
$ m'' : C' \to {\rm K}(G'', 1)$. Passing to homology we have
$$
H_2 (m'') = H_2(\mu'') \colon H_{2}(\Pi_{g'} , \ZZ) = H_{2}( C' , \ZZ) \to H_2(G'', \ZZ) \, .
$$
One defines then $h(v)$ as the image of  the fundamental class of $C'$, the generator $ [C']$ of $ H_{2}( C' , \ZZ)$
determined by the complex orientation:
$$
h(v):=H_2(\mu'')[C'] \in H_2(G'', \ZZ) \, .
$$

More generally Edmonds (\cite{Edm I, Edm II}) showed that 
$\nu$ and $h$ determine $\rho$ for $G$ abelian, and that, 
 if moreover $G$ is split-metacyclic and the action is free (i.e., $G = G''$), then $h$
determines $\rho$.

\medskip

In our recent paper \cite{CLP12} we 
considered the case  $G=D_n$
of  the dihedral group of order $2n$. After  showing that in this case  $(\nu,h)$ does not determine $\rho$,
we introduced therefore  a  finer homological invariant $\e \in G_{ \Ga}$, where $\Ga$ is the union
of the conjugacy classes of the local monodromies $c_i$, and $ G_{ \Ga}$ 
is a group constructed from the given group $G$ and  the given subset $\Ga$.
The main result of  \cite{CLP12} was to  prove that in the case of the dihedral  group our invariant   $\e$ determines the class of $\rho$.

We do not recall here the definition of the group $ G_{ \Ga}$ and of the invariant $\e (v)$,
since the whole section two is devoted to these definitions and their topological interpretation
(which was not contained in \cite{CLP12}).
It suffices here to observe that this new invariant encodes in particular all the classical numerical and homological invariants. 

It would be a too nice and simple world if this invariant would do the job for any group. But this invariant,
which in the unramified case coincides with the classical invariant in $H_2 (G, \ZZ)$, does not distinguish irreducible components in general (see \cite{DT}).

It  only does so if the genus $g'$ of the quotient curve is large enough. The crucial point, as in Harer's theorem,
is the concept of stabilization.

\noindent \underline{Direct stabilization:} $\mu$ stabilizes by extending 
$
\mu(\a_{g'+1})= \mu(\b_{g'+1})=1 \in G \, .
$
In other words, we add a handle to the quotient curve $C'$, such that on it the monodromy is trivial.

\noindent \underline{Stabilization:} is defined as
the equivalence relation generated by direct stabilization.

In the \'etale case it was shown that the homology invariant is a full  `stable' invariant.

\begin{theo}[Dunfield-Thurston]
In the unramified case ($d=0$), for $g' >> 0$, the  equivalence classes are in bijection with 
$$
\frac{H_2(G,\ZZ)}{{\rm Aut}(G)} \, .
$$
\end{theo}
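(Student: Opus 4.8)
The plan is to make the bijection explicit through the classical homological invariant $h$ and to prove its completeness by a three-dimensional bordism argument. Since $d=0$ there are no local monodromies, so the normally generated subgroup $H$ is trivial and $G''=G$; hence $\mu''=\mu$ and $h(v)=H_2(\mu)[C']\in H_2(G,\ZZ)$ is simply the image of the fundamental class under the classifying map $C'\to{\rm K}(G,1)$ of the (now unramified) cover $C\to C'$. First I would check that $v\mapsto h(v)$ descends to a well-defined map $\Phi$ on stable equivalence classes valued in $H_2(G,\ZZ)/{\rm Aut}(G)$. Invariance under ${\rm Map}_{g'}={\rm Out}^+(\Pi_{g'})$ is immediate: such a class acts on $\mu$ by precomposition with an orientation-preserving (outer) automorphism of $\Pi_{g'}$, which induces the identity on $H_2(\Pi_{g'},\ZZ)=\ZZ$ and (for the inner part) acts trivially on $H_2(G,\ZZ)$, so $h$ is fixed. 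Post-composition with $\psi\in{\rm Aut}(G)$ replaces $h$ by $H_2(\psi)(h)$, which is precisely why one passes to the quotient by ${\rm Aut}(G)$. Finally, direct stabilization appends a handle on which $\mu$ is trivial; on that handle the classifying map factors through a point, so the added handle contributes nothing to $H_2$ and $h$ is unchanged. Thus $\Phi$ is a well-defined stable invariant.

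For surjectivity I would use the identification $H_2(G,\ZZ)=H_2({\rm K}(G,1),\ZZ)\cong\Omega_2^{SO}({\rm K}(G,1))$ of group homology with oriented bordism in degree two (valid because $\Omega_1^{SO}=\Omega_2^{SO}=0$). Every class is therefore represented by a map $f\colon\Sigma\to{\rm K}(G,1)$ from a closed oriented surface, i.e. by a homomorphism $\pi_1(\Sigma)\to G$, equivalently by a vector automatically satisfying the relation (iii) (condition (ii) being vacuous for $d=0$) but not necessarily the generation condition (i). To upgrade it to an epimorphism I would stabilize, adjoining finitely many handles whose $a_j$ run over a generating set of $G$ (with $b_j=1$): this makes the monodromy surjective without altering the homology class, so $\Phi$ is onto.

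The substance of the theorem is injectivity, where the plan is to promote the equality $h(v)=h(v')$ to a geometric cobordism and then read that cobordism as a chain of stabilizations. After adjusting by an automorphism so the invariants agree on the nose in $H_2(G,\ZZ)$, the maps $f,f'\colon\Sigma_{g'}\to{\rm K}(G,1)$ represent the same bordism class, so there is a compact oriented $3$-manifold $W$ with $\partial W=\Sigma_{g'}\sqcup(-\Sigma_{g'})$ and a map $F\colon W\to{\rm K}(G,1)$ restricting to $f$ and $f'$. Choosing a self-indexing Morse function on $W$ and crossing the critical levels decomposes the passage from one surface to the other into handle attachments. A critical point of index $1$ performs a $0$-surgery on the level surface, replacing two disks by a tube and thereby adding one handle; the belt circle of this tube bounds the cocore disk in $W$, so $F$ sends it to $1_G$, and the new handle contributes a pair of generators of the shape $(w,1)$ with $w\in G$. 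The key lemma is then that, once the genus is in the stable range, inserting such a pair is equivalent to a direct stabilization: using the generation condition (i) together with Dehn-twist (Nielsen) moves that slide $w$ across the other handles, one reduces $w$ to $1_G$ without leaving the equivalence class. Index-$2$ critical points are treated as the inverse operation and index-$0,3$ points only create or cancel trivial spheres; assembling these identifications yields $v\sim v'$.

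The main obstacle is exactly this lemma and its bookkeeping: tracking, through the Morse-theoretic surgeries on $W$, not merely the attaching data but its $G$-labeling, and showing that each index-$1$ insertion $(w,1)$ is absorbed into a direct stabilization by mapping-class moves. This is where the hypothesis $g'\gg0$ is indispensable, furnishing enough spare handles to carry out the sliding reductions and, via the Harer-type genus-stabilization statement, guaranteeing that the auxiliary genus changes along $W$ can be undone so that $v$ and $v'$ already agree at one fixed large genus. By contrast the well-definedness and surjectivity are formal; the completeness of $h$ as a stable invariant, encoded in this handle calculus, is the essential content and is the part that genuinely needs the stable range, exactly as in Harer's theorem.
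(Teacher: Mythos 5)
Your proposal follows essentially the same route the paper indicates for this quoted theorem of Dunfield--Thurston: the paper does not reprove it but attributes it to the bordism interpretation $H_2(G,\ZZ)\cong\Omega_2^{SO}(K(G,1))$ for surjectivity and to Livingston's theorem (in the Morse-theoretic form of \cite{DT}, with the algebraic version in \cite{Zimmer}) for injectivity, which is exactly the decomposition you give. Your ``key lemma'' that an inserted pair $(w,1)$ with trivial belt-circle monodromy can be absorbed into a direct stabilization is precisely the content of Zimmermann's Lemma 2.6, reproved in the paper's Lemma \ref{zimmer+}, so the argument is sound and matches the intended proof.
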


The proof of the above theorem is based on the interpretation of second homology as bordism,
and on Livingston 's theorem (\cite{Livingston}) showing that two unramified monodromies having the 
same homology class in $H_2(G,\ZZ)$ are stably equivalent.  A very suggestive proof of Livingston 's theorem,
based on the
concept of a relative Morse function with increasing  Morse indices, is given in \cite{DT}, while
an algebraic proof was given by Zimmermann in \cite{Zimmer}.

In the ramified case, the situation is much more complicated, and it turns out to be  safer
to rely on the algebraic technique of Zimmermann in order to set up a secure, even if technical,   proof of
 the following main theorem:
 
\begin{theo}\label{stablebranched}
For $g'>>0$, the equivalence classes are in bijection with the set of admissible classes  $\e$.
\end{theo}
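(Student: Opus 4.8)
The plan is to prove the theorem in three parts: that $\e$ descends to a well-defined map on stable equivalence classes, that this map is surjective onto the admissible classes, and that it is injective. \textbf{Step 1 (well-definedness).} First I would check that $\e(v)$ is unchanged under each of the operations generating the equivalence relation: the diagonal action of $\Aut(G)$, the Hurwitz and symplectic moves constituting ${\rm Map}_{g',d}$, and direct stabilization $v \mapsto (v,1,1)$. Invariance under the first two is built into the construction of $G_\Gamma$ and $\e$ (Section~2 and \cite{CLP12}); for stabilization, adjoining a handle with trivial monodromy contributes only a trivial commutator and so cannot alter $\e$. This yields a well-defined map $\Phi$ from stable equivalence classes to the set of admissible classes in $G_\Gamma$.

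\textbf{Step 2 (surjectivity).} Next I would show that every admissible $\e$ is realized by some Hurwitz generating system. The admissibility condition is precisely the list of compatibility constraints---the $\nu$-type must be consistent with the product relation, and the homological part must lie in the correct coset---that is necessary to write such a system down. I would give an explicit construction: pick local monodromies $c_1,\dots,c_d$ realizing the prescribed conjugacy-class multiplicities, and then, for $g'$ large, choose handle elements $a_j,b_j$ realizing the prescribed homological data and correcting the product so that $\prod_i c_i \prod_j [a_j,b_j]=1$. Surjectivity of the resulting $\mu$ onto $G$ is arranged by enlarging $g'$ and adding generating handles, which leaves $\e$ untouched.

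\textbf{Step 3 (injectivity).} The heart of the proof is to show that $\e(v)=\e(w)$ forces $v$ and $w$ to be stably equivalent, and here I would follow the algebraic strategy of Zimmermann (\cite{Zimmer},\cite{DT}). I would first normalize the ramification part: using Hurwitz moves to reorder the $c_i$, and using one extra stabilizing handle to realize the conjugation $c_i \mapsto g c_i g^{-1}$ of an individual local monodromy, I would bring both $v$ and $w$ into a form in which the local monodromies are grouped by conjugacy class in a fixed order and equal to fixed chosen representatives. Since $\e$ determines the $\nu$-type, the two systems then share identical ramification tuples and a common value $g_0 := (\prod_i c_i)^{-1}$. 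After this the two systems differ only in their handle parts, each satisfying $\prod_j [a_j,b_j]=g_0$, and the problem reduces to a relative version of the closed (unramified) case. This is exactly where Livingston's theorem, in Zimmermann's algebraic reformulation, applies: systems of handle generators with equal image in the relevant second homology are stably equivalent. The refinement needed is to run this argument inside $G_\Gamma$ rather than merely in $H_2(G'',\ZZ)$, tracking the full invariant $\e$ so that its equality supplies, relation by relation, the sequence of stabilizations and Hurwitz moves carrying the handle part of $v$ to that of $w$.

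\textbf{Main obstacle.} I expect Step~3, and within it the interaction between ramification and handles, to be the decisive difficulty. One must guarantee that the moves used to normalize and compare the handle parts do not disturb the already-normalized local monodromies, and, more importantly, that the \emph{refined} invariant $\e$---and not just the pair $(\nu,h)$, which is known to be insufficient (\cite{DT})---is exactly the obstruction detected by the $G_\Gamma$-analogues of Zimmermann's relations. Setting up these relations correctly, and verifying that the hypothesis $g' \gg 0$ always furnishes enough handles to perform each required move, is the technical core; the remainder is routine bookkeeping once the normal form and the relations are in place.
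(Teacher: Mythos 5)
Your outline follows essentially the same route as the paper: invariance of $\e$ under $\Aut(G)$, the mapping class group and stabilization; realization of every admissible class; and injectivity via normalization of the local monodromies followed by Zimmermann's algebraic form of Livingston's theorem. Your normalization device (an extra stabilizing handle used to conjugate an individual $c_i$) is exactly the paper's Proposition \ref{1st step} and Lemma \ref{zimmer+}, and the difficulty you single out at the end is exactly the paper's ``2nd step''. There are, however, two places where your proposal falls short of the statement as the paper proves it (Theorem \ref{maintheo}). First, the theorem concerns the equivalence classes at a \emph{fixed} large genus $g'$, not only stable classes. Your Step 3 produces, from $\e(v)=\e(w)$, some $h$ with $[v^h]=[w^h]$; to conclude $[v]=[w]$ at genus $g'$ you still need injectivity of the stabilization map \eqref{stabmod} for $g'$ large. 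The paper obtains this by showing $\varsigma_{g'}$ is surjective for $g'\geq |G|$ (every system is itself a stabilization, by the argument of \cite[Prop.\ 6.16]{DT} adapted to the ramified case) and then using finiteness of the orbit sets to force eventual bijectivity. Without this ingredient your argument only identifies the \emph{stable} classes with the admissible classes.

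Second, the obstacle you name in Step 3 is indeed the core, and your proposal stops at naming it rather than resolving it. After the local monodromies are normalized, $\e(v)=\e(w)$ gives equality of $ev(\hat v)$ and $ev(\hat w)$ only modulo $R_\Gamma$, whereas Livingston--Zimmermann (Proposition \ref{sc}) requires equality modulo $[F,R]$; the discrepancy is a product of relators $\hat x\hat y\hat z^{-1}\hat y^{-1}$ with $x,z\in\Gamma$. The paper's mechanism is: using $\nu(v)=\nu(w)$ and passing to $F^{ab}$, regroup these relators (Lemmas \ref{=modfr} and \ref{2ndstepl1}) into commutators $[\widehat{\x_m},\widehat{\eta_m}]$ with $\x_m\in\Gamma$ and $[\x_m,\eta_m]=1$ in $G$; then realize each such commutator by an honest mapping class --- the move \eqref{CLP} dragging a branch point with local monodromy $\x_m$ around a freshly stabilized handle carrying $(\x_m,\eta_m)$ --- whose effect on $ev(\hat v)$ modulo $[F,R]$ is computed in Lemma \ref{2nd step l3}. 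Only after absorbing all these terms does one land in the hypotheses of Proposition \ref{sc}. Your plan is compatible with this, but supplying these specific relations and moves is the actual content of the proof rather than ``routine bookkeeping''.
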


In the above theorem, the condition of admissibility is the simple translation of  the condition that the 
product of the local monodromies $c_1 \dots c_d$ must be a product of commutators.

In a sequel to this paper we shall   prove another stabilization, which we call branching stabilization, and 
which generalizes the following result (see \cite{FV})
\begin{theo}[Conway -Parker]
In the  case $g'=0$, let $G = F / R$ where $F$ is a free group, and  assume that $  H_2(G,\ZZ) \cong \frac{[F,F]\cap R}{[F,R]}$ is generated by commutators. Then  there is an integer $N$ such that if the numerical function $\nu$ takes values $\geq N$,

then there is only one   equivalence class  with the given  numerical function $\nu$.
\end{theo}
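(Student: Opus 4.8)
The plan is to attach to each genus-zero Hurwitz system a lifting invariant in the Schur multiplier, to prove that in the large-multiplicity range this invariant is complete, and to show that the hypothesis forces it to be trivial. First I would construct the invariant. Writing $G=F/R$, set $E:=F/[F,R]$, a central extension $1\to K\to E\to G\to 1$ with $K=R/[F,R]$ and $H_2(G,\ZZ)=([F,F]\cap R)/[F,R]\le K$. For every conjugacy class met by $\nu$ I fix a base representative together with a base lift to $E$, and for $c_i$ lying in that class I lift $c_i$ as the corresponding conjugate. Given a system $v=(c_1,\dots,c_d)$ with $\prod c_i=1$, the product $w(v):=\tilde c_1\cdots\tilde c_d$ lies in $K$; its image in $K/H_2(G,\ZZ)$ sits inside the free abelian group $F/[F,F]$ and depends only on the class-counts, that is only on $\nu$. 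Subtracting this fixed contribution, I obtain the \emph{lifting invariant} $\ell(v)\in H_2(G,\ZZ)$.

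Next I would analyze how $\ell$ transforms under the equivalence. A Hurwitz move $(c_i,c_{i+1})\mapsto(c_{i+1},c_{i+1}^{-1}c_ic_{i+1})$ alters $w(v)$ only by a commutator in $E$: since $K$ is central, a direct computation shows the two successive lift-products differ by an element of $[E,E]\cap K=H_2(G,\ZZ)$ of the shape $[\tilde c_{i+1},\tilde u]$, where $\tilde u$ expresses $c_i$ from its base representative. Likewise $\Aut(G)$ acts on $\ell$ through its natural action on $H_2(G,\ZZ)$. Hence the genuine invariant of the equivalence class is the image $\bar\ell(v)$ of $\ell(v)$ in $H_2(G,\ZZ)/D$, where $D$ is the subgroup generated by all such commutators realizable from the entries of $v$.

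The technical core is a stabilization statement: for $\nu$ with all multiplicities at least some $N$, two systems with the same $\nu$ and the same $\bar\ell$ are equivalent. I would establish this by the relative-Morse, normal-form strategy underlying Livingston's theorem, in the algebraic formulation of Zimmermann, adapted to the product-one ramified setting. Large multiplicities guarantee an abundance of repeated local generators; by braiding one fabricates cancelling blocks, slides them through the vector to conjugate arbitrary entries, and thereby reduces any system to a canonical shape determined by $\nu$ and by the residual central datum $\bar\ell$. This is where essentially all the difficulty lies, because one must simultaneously preserve the generation property (1), respect the single relation $\prod c_i=1$, and keep exact account of the accumulated central corrections.

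Finally I would invoke the hypothesis. Because $H_2(G,\ZZ)=([F,F]\cap R)/[F,R]$ is generated by commutators, every element of $H_2(G,\ZZ)$ is represented by a product of commutators $[\tilde x,\tilde y]$ with $x,y$ in the subgroup generated by the entries, which by (1) is all of $G$. In the stable range each such commutator is realized by braiding a cancelling block against entries lifting $x$ and $y$, so $D=H_2(G,\ZZ)$ and $\bar\ell$ is trivial. Thus every admissible system with class-function $\nu$ carries the same invariant, and by the completeness established in the previous step they all lie in a single equivalence class.
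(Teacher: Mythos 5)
A preliminary remark: the paper does not prove this statement at all — it quotes it as a classical theorem of Conway and Parker (see \cite{FV}) and explicitly defers the ``branching stabilization'' that generalizes it to a sequel, announced to use ``a variant of the semi-group and of the group introduced by Conway and Parker''. So your attempt can only be measured against the known proofs and the machinery this paper sets up. Your first and third steps are essentially sound, and in fact reconstruct that machinery: your central extension with $E=F/[F,R]$ and your quotient $H_2(G,\ZZ)/D$ are precisely the paper's $K_\Gamma$ and $H_{2,\Gamma}(G)$ (Definitions \ref{GGamma} and \ref{H2Gamma}, together with the surjection \eqref{h2toh2ga}), and your $\bar\ell$ is the reduced $\e$-invariant. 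Two caveats. First, the ``conjugate of the base lift'' is ambiguous: if $c=ubu^{-1}=u'bu'^{-1}$, the two lifts differ by $[\tilde b,\tilde z]$ with $z=u^{-1}u'\in C_G(b)$, so already $\ell(v)$ is defined only modulo $D$, not in $H_2(G,\ZZ)$. Second, in your last step the relevant point is not that $x,y$ lie in the subgroup generated by the entries, but that $x$ is \emph{conjugate to an entry}, i.e. $x\in\Gamma$; this holds only because $\nu\geq N\geq 1$ on \emph{every} nontrivial class, whence $D$ contains all commutators $[\tilde x,\tilde y]$ with $xy=yx$ and equals $H_2(G,\ZZ)$ by the hypothesis.

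The genuine gap is your second step, which is the entire content of the theorem and for which your proposed tool does not work. Livingston's theorem \cite{Livingston} and Zimmermann's algebraic version \cite{Zimmer} prove \emph{genus} stabilization: the discrepancy between two monodromies, a product of commutators modulo $[F,R]$, is absorbed by \emph{adding handles} with trivial monodromy and acting with mapping classes supported on the added genus (Zimmermann's moves 2.1--2.5). That is exactly how this paper proves its Theorem \ref{main}, and it is possible only because $g'$ is allowed to grow. In the Conway--Parker setting $g'=0$ is frozen: there are no handle coordinates, none may be created, and the only available moves are braid moves on the $c_i$, which \emph{preserve} the reduced invariant $\bar\ell$; the resource replacing genus is the multiplicity of each class in $\nu$. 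Converting large multiplicity into connectivity is precisely what the Conway--Parker semigroup argument accomplishes (concatenation of tuples modulo braid equivalence, finiteness of the set of classes, eventual bijectivity of the stabilization maps), and no part of the Livingston/Zimmermann handle calculus supplies it. Your sentence ``by braiding one fabricates cancelling blocks, slides them through the vector, and thereby reduces any system to a canonical shape determined by $\nu$ and $\bar\ell$'' is a restatement of the theorem to be proven, not an argument; as written, the proof is circular at its core.
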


We shall  get  an analogous result for any genus $g'$, using our fuller homological invariant $\e$.  In the course of
proving branching stabilization, we shall also give a different proof of our genus stabilization result, using
a variant of the semi-group and of the group introduced by Conway and Parker.

Finally, we want to mention the  interesting question of determining the class of groups for which no stabilization is needed, 
thus extending to other groups the result  we obtained for
the dihedral groups.

%%%%%%%%%%%%%%%%%%%%%%%%
\section{The $\e$-invariant}
In this Section we first review the definition of the $\e$-invariant of Hurwitz vectors which was introduced in \cite[Sec. 3]{CLP12}, 
then we give a topological interpretation of this invariant as a class in a certain relative homology group of $G$
modulo an equivalence relation.   Although we don't use explicitly this topological interpretation in the proof of the main theorem,
many of the technical results which we prove throughout the paper are just algebraic reformulations of simple geometrical
statements whose understanding is easier using a topological view of the $\e$-invariant.   

Let us first recall the following
\begin{definition}\label{factorisations}
Let $G$ be a finite group, and let $g',d \in \NN$. A $g',d$-\emph{Hurwitz vector
in} $G$ is an element $v\in G^{d+2g'}$, the Cartesian product of $G$ $(d+2g')$-times. 
A $g',d$-Hurwitz vector in $G$ will  also be denoted by
$$
v=(c_1, \dots , c_d \, ; \, a_1, b_1, \dots , a_{g'},b_{g'})\, .
$$
For any $i\in \{ 1, \dots , d+2g'\}$, the $i$-th component $v_i$ of $v$ is defined as usual.
The \emph{evaluation} of $v$ is the element 
$$
ev(v): =\prod_1^d c_j \cdot \prod_1^{g'}[a_i, b_i] \in G \, .
$$ 

A \emph{Hurwitz generating system of length $d+2g'$ in $G$} is a $g',d$-Hurwitz vector $v$ in $G$
such that the following conditions hold:
\begin{itemize}
\item[(i)] $c_i\not=1$ for all $i$;
\item[(ii)] $G$ is generated by the components  $v_i$ of 
$v$;
\item[(iii)] $\prod_1^d c_j \cdot \prod_{1}^{g'}[ a_i , b_i] =1$.
\end{itemize}
We denote by $HS(G;g',d)\subset G^{2g'+d}$ the set of all Hurwitz generating systems in $G$ of
type $(g', d)$ (hence of  length $d+2g'$).
\end{definition}

\begin{definition}\label{GGamma}
Let $G$ be a finite group.

>From now on, $F: =\langle \hat{g}\, | \, g\in G\rangle$
shall be  the free group generated by the  elements of $G$. 
Let $R\unlhd F$ be the normal subgroup of relations, so  that 
$G=\frac{F}{R}$.

For any union of non-trivial conjugacy classes $\Ga \subset G$,  define
\begin{align*}
& R_\Ga : = \langle \langle [F,R], \hat{a}\hat{b}\hat{c}^{-1}\hat{b}^{-1}\, | \, \forall a, c  \in \Ga ,b \in G \ s.t. \  ab=bc\rangle \rangle \, , \\
& G_\Ga: = \frac{F}{R_\Ga} \, .
\end{align*}
The map $\hat{a}\mapsto a$, $\forall a \in G$, induces a group homomorphism $\a \colon G_\Ga \to G$ whose kernel 
shall be denoted by  $K_\Ga : =Ker(\a)$.
\end{definition}

By \cite[Lemma 3.2]{CLP12}, $K_\Ga = \frac{R}{R_\Ga}$ is contained in the centre of $G_\Ga$.

Finally we set:
\begin{definition}\label{ev}
Given a $g',d$-Hurwitz vector 
$$
v=(c_1, \dots , c_d; a_1, b_1 , \dots , a_{g'}, b_{g'})
$$
 in $G$ (cf. Definition \ref{factorisations}), 
 its \emph{tautological lift}, $\hat{v}$, is the $g', d$-Hurwitz vector in $G_\Ga$ defined by
 $$
 \hat{v}: = (\widehat{c_1}, \dots , \widehat{c_d}; \widehat{a_1}, \widehat{b_1}, \dots ,  \widehat{a_{g'}}, \widehat{b_{g'}})
 $$ 
 whose components  are the tautological lifts of the components of $v$. 
 
Given a $g',d$-Hurwitz vector $v$ in $G$ with $c_i\not= 1$, $\forall i= 1, \dots d$, we denote by $\Ga_v$ the union of all 
the conjugacy classes of $G$
containing at least one $c_i$.

For any Hurwitz generating system $v$  (or Hurwitz vector satisfying (iii)), set
$$
\e(v): = \prod_1^d \widehat{c_j} \cdot \prod_1^{g'}[\widehat{a_i}, \widehat{b_i}]\in K_{\Ga_v} \, ,
$$ 
to be  the evaluation of the tautological lift $\hat{v}$ of $v$ in $G_{\Ga_v}$ (cf. Definition \ref{factorisations}).
\end{definition}

By \cite[Lemma 3.5]{CLP12}, any automorphism $f\in \Aut (G)$ induces an isomorphism $f_\Ga \colon K_\Ga \to K_{f(\Ga)}$ in a natural way,
such that $\e(f(v))=f_\Ga (\e(v))$, where $\Ga=\Ga_v$. Therefore the map
$$
\e \colon HS(G; g',d) \to \coprod_\Ga K_\Ga \, , \quad v \mapsto \e(v) \in K_{\Ga_v} 
$$
where $\coprod_\Ga K_\Ga$ denotes the disjoint union of the $\Ga$'s, 
descends to a map
$$
\tilde{\e} \colon HS(G;g' , d)/_{\Aut (G)} \to \left(  \coprod_\Ga K_\Ga \right)/_{\Aut (G)} \, .
$$
The key point here is that $\tilde{\e}$ is invariant under the action of the mapping class group $Map(g',d)$ \cite[Prop. 3.6]{CLP12},
hence giving a well defined map
\begin{equation}\label{ehat}
\hat{\e} \colon \left( HS(G;g' , d)/_{\Aut (G)} \right)/_{Map(g',d)} 
\to \left(  \coprod_\Ga K_\Ga \right)/_{\Aut (G)} \, .
\end{equation}
One of the main results of  \cite{CLP12} says that, when $G=D_n$, $\hat{\e}$ is injective ({\it loc. cit.} Thm. 5.1).
With the aid of this we reached   a classification of  the orbits of Hurwitz generating systems for $D_n$
under the action of $Map(g',d)$ modulo automorphisms in $\Aut (G)$.

\medskip

For later use, let us recall the following definition \cite[Def. 3.11]{CLP12}.
\begin{definition}\label{H2Gamma}
Let $\Ga \subset G$ be a union of non-trivial conjugacy classes of $G$. We define
$$
H_{2,\Ga}(G)=\ker \left( G_\Ga \to G\times G_\Ga^{ab} \right) \, ,
$$
where $G_\Ga \to G\times G_\Ga^{ab}$ is the morphism with first component $\a$ (defined in Definition \ref{GGamma})
and second component the natural epimorphism $G_\Ga \to  G_\Ga^{ab}$.
\end{definition}
Notice that 
$$
H_2(G,\ZZ)\cong \frac{R\cap [F,F]}{[F,R]} \cong \ker \left( \frac{F}{[F,R]} \to G\times G^{ab}_\emptyset \right) \, .
$$
In particular, when $\Ga = \emptyset$, $H_{2,\Ga}(G)\cong H_2(G, \ZZ)$.

By \cite[Lemma 3.12]{CLP12} we have that the morphism
$$
R\cap [F,F] \to \frac{R}{R_\Ga} \, , \quad r \mapsto rR_\Ga
$$
induces a surjective group homomorphism 
\begin{equation}\label{h2toh2ga}
H_2(G,\ZZ) \to H_{2, \Ga}(G) \, .
\end{equation}

%%%%%%%%%%%%%%%%%%%%%%%%%%%%%%%%%

\subsection{A topological interpretation  of $\e (v)$}

For a finite group $G$, let $BG$ be the CW-complex defined as follows (see \cite[Ch. V, 7.]{Whitehead} for more details).
The $0$-skeleton $BG^0$ consists of one point. The $1$-skeleton 
$$
BG^1= \bigvee_{g\in G}S^1_{g}
$$
is a wedge  of circles indexed by $g\in G$ meeting in $BG^0$. Then the fundamental group has a canonical isomorphism $\pi_1(BG^1)\cong F$, the isomorphism being given by
sending a generator of $\pi_1(S^1_g)$ to $\hat{g}$. In this way we get an epimorphism $\pi_1(BG^1)\to G$ whose kernel
is identified with $R$ by the previous isomorphism. For any $r\in R$, let $h_r \colon S^1 \to BG^1$
be the  continuous map such that the image of a chosen generator of $\pi_1(S^1)$ under $(h_r)_*$ is $r\in \pi_1(BG^1)$. 
Using $h_r$ we attach the $2$-cell $E^2_r$ to $BG^1$. This gives  the $2$-skeleton $BG^2$,
with the property that $\pi_1(BG^2)\cong G$. The $3$-skeleton $BG^3$ is defined by attaching  $3$-cells to $BG^2$ in such a way that 
$\pi_2(BG^3)=0$ and, by induction, the $(n+1)$-skeleton $BG^{n+1}$ is defined similarly in order to get $\pi_n(BG^{n+1})=0$, $n\geq 2$.
The CW-complex $BG$ is the inductive limit $BG : =  \cup \{ BG^n\, | \, n\geq 0\}$ thus obtained. 
By construction $BG$ is an Eilenberg-Mac Lane space of type $(G,1)$, i.e. a $K(G,1)$-space.
Furthermore there is a principal $G$-bundle $EG \to BG$ with $EG$ contractible.

\begin{lemma}\label{relhopf}
Let $G$ be a finite group and let $BG$ be the CW-complex defined above. There is an isomorphism 
$$
\frac{R}{[F,R]} \cong H_2(BG,BG^1)
$$
such that the following diagram is commutative:
$$
\xymatrix{
0 \ar[r] & H_2(BG) \ar[d] \ar[r] &H_2(BG,BG^1) \ar[d] \ar[r] & H_1(BG^1) \ar[d] \ar[r] & H_1(BG) \ar[d] \ar[r] & 0 \\
0 \ar[r] & \frac{R\cap[F,F]}{[F,R]} \ar[r] & \frac{R}{[F,R]}  \ar[r] & F^{ab} \ar[r] & G^{ab} \ar[r] & 0
}
$$
where all the homology groups are with integer coefficients,
 the horizontal sequences are exact  (the upper one being part of the homology sequence of the pair $(BG,BG^1)$) 
 and the vertical arrows are isomorphisms (the one on the left is given by Hopf's theorem, see \cite{Hopf}, the two on the right 
 are the canonical isomorphisms $H_1 \cong \pi_1^{ab}$).
\end{lemma}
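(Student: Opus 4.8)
The plan is to recognize the two rows as the exact sequences attached to the pair $(BG,BG^1)$, to construct the middle vertical map geometrically, and then to invoke the five lemma. The top row is a segment of the long exact homology sequence of the pair; it acquires the displayed shape once one notes that $H_2(BG^1)=0$ (a wedge of circles has no homology above degree $1$, which also gives injectivity at $H_2(BG)$) and that $H_1(BG,BG^1)=0$ — indeed its tail reads $H_1(BG^1)\to H_1(BG)\to H_1(BG,BG^1)\to \tilde H_0(BG^1)=0$, and $H_1(BG^1)\to H_1(BG)$ is the surjection $F^{ab}\to G^{ab}$. Exactness of the bottom row is elementary, the only points being the inclusion $[F,R]\subseteq R\cap[F,F]$ and the identification of $\ker(F^{ab}\to G^{ab})$ with the image of $R$. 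The two right-hand verticals are the canonical Hurewicz isomorphisms $H_1(-)\cong\pi_1(-)^{ab}$, and the rightmost square commutes because $BG^1\hookrightarrow BG$ induces $F\to G$ on $\pi_1$ (naturality of Hurewicz).

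For the middle map, note that for $r\in R$ the loop $h_r$ represents $r\in\ker(F\to G)$ and so is null-homotopic in the aspherical space $BG$; it thus bounds a map $u_r\colon (D^2,\partial D^2)\to(BG,BG^1)$ with $u_r|_{\partial D^2}=h_r$, and I set $\psi(r):=(u_r)_*[D^2,\partial D^2]\in H_2(BG,BG^1)$. This is independent of the filling, since two fillings glue to a map $S^2\to BG$, null-homotopic because $\pi_2(BG)=0$, so their relative classes agree. The assignment is additive (a pair-of-pants $P\simeq S^1\vee S^1$ sent to $BG^1$ by $x_i\mapsto r_i$ and capped with $u_{r_1},u_{r_2}$ realizes $u_{r_1r_2}$, while the $P$-part lies in $BG^1$ and so is null relatively) and conjugation-invariant (free homotopy of the boundary loop preserves the relative class); hence $\psi$ descends to a homomorphism $\bar\psi\colon R/[F,R]\to H_2(BG,BG^1)$, whose inverse will be the asserted middle vertical arrow.

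It remains to check the two nontrivial squares. For the connecting-map square, naturality of $\partial$ along $u_r$ gives $\partial\,\bar\psi(r)=(h_r)_*[\partial D^2]$, which under $H_1(BG^1)\cong F^{ab}$ equals $r\bmod[F,F]$, exactly the bottom map $R/[F,R]\to F^{ab}$. For the Hopf square take $r\in R\cap[F,F]$: then $r$ is null-homologous already in $BG^1$, so $h_r=\partial\sigma$ for a $2$-chain $\sigma$ in $BG^1$, and capping the filling disk gives an absolute $2$-cycle $u_r(D^2)-\sigma$ in $BG$ whose class is, by definition, Hopf's class of $r$. Passing to relative homology kills $\sigma$, so the image of this Hopf class under $H_2(BG)\to H_2(BG,BG^1)$ is precisely $\bar\psi(r)$; thus the left square commutes.

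We then have a map from the bottom exact row to the top one whose middle arrow is $\bar\psi$ and whose other three verticals are the inverses of the given isomorphisms, with all squares commuting; by the five lemma $\bar\psi$ is an isomorphism, and its inverse is the required middle map making the stated diagram commute. The main obstacle is this last square: matching the homotopy-theoretic class $\bar\psi(r)$ with Hopf's classical isomorphism forces one to use the explicit ``cap the disk by a bounding $2$-chain'' description of the latter; a cleaner but heavier alternative bypasses the five lemma altogether, since the homotopy sequence of the pair yields $\partial\colon\pi_2(BG,BG^1)\cong R$ ($F$-equivariantly, with $F$ acting by conjugation), and the relative Hurewicz theorem identifies $H_2(BG,BG^1)$ with the $\pi_1(BG^1)$-coinvariants of $\pi_2(BG,BG^1)$, the Peiffer identity of the underlying crossed module making these coinvariants automatically abelian and so equal to $R/[F,R]$.
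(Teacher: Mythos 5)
Your proof is correct, but it runs in the opposite direction from the paper's. The paper works entirely with cellular chain complexes: it sets $C_n(K)=H_n(K^n,K^{n-1})$, uses the short exact sequence of cellular complexes for the pair $(BG,BG^1)$ to identify $C_n(BG,BG^1)\cong C_n(BG)$ for $n\geq 2$, defines a map $C_2(BG)\to \frac{R}{[F,R]}$ by sending each $2$-cell to the class of its attaching map, quotes Hopf's Satz I to identify the kernel with $\partial\left(C_3(BG)\right)$, and concludes with the five lemma; commutativity of the left-hand square is then automatic, because the Hopf isomorphism is literally the restriction of this same cellular recipe. You instead build the inverse arrow $\bar\psi\colon \frac{R}{[F,R]}\to H_2(BG,BG^1)$ geometrically by filling the null-homotopic loop $h_r$ with a disk, using asphericity of $BG$ for well-definedness and the pair-of-pants and cylinder arguments for additivity and conjugation invariance (which together kill $[F,R]$), and only then invoke the five lemma. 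The trade-off is exactly the one you identify: your construction is more transparently geometric and independent of the particular indexing of the $2$-cells of $BG$ by elements of $R$, but the left-hand square is no longer tautological and requires the explicit ``filling disk minus bounding $2$-chain'' description of Hopf's isomorphism, which you correctly supply. Your closing remark that the relative Hurewicz theorem plus the crossed-module structure of $\pi_2(BG,BG^1)\cong R$ gives $H_2(BG,BG^1)\cong R/[F,R]$ directly is also a valid (and arguably cleaner) alternative, not used in the paper. One small point worth making explicit in your additivity step is that $\psi(r)$ depends only on the based homotopy class of the boundary loop in $BG^1$, not on the chosen representative $h_r$; this follows from the same gluing argument as well-definedness and is needed when you identify the outer boundary of the pair of pants with $h_{r_1r_2}$.
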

\begin{proof}
Recall that the homology of a CW-complex $K=\{ K^n \}_{n\in \NN}$ can be computed as follows (cf. \cite[Ch. IX]{Massey}).
Define 
\begin{eqnarray*}
&&C_n(K):=H_n(K^n,K^{n-1}) \, , \\
&&\partial_n \colon C_n(K) \to C_{n-1}(K)\, , \quad \partial_n =j_{n-1}\circ\partial_* \, ,
\end{eqnarray*}
where $\partial_* \colon H_n(K^n,K^{n-1})\to H_{n-1}(K^{n-1})$ is the boundary operator of the long exact sequence 
of the pair $(K^n,K^{n-1})$ and $j_{n-1} \colon H_{n-1}(K^{n-1}) \to H_{n-1}(K^{n-1}, K^{n-2})$ is the homomorphism
induced by the inclusion map. Then $\{C_\bullet (K), \partial_\bullet\}$ is a complex and
 $H_n(K)\cong H_n(C_\bullet (K))$.

We regard $BG^1$ as a $1$-dimensional CW-complex, so the above construction gives the complexes $C_\bullet(BG)$
and $C_\bullet(BG^1)$. Notice that the inclusion $BG^1 \to BG$ gives an injective map 
$C_\bullet(BG^1)\to C_\bullet (BG)$ and define 
$$
C_\bullet (BG,BG^1):=\frac{C_\bullet(BG)}{C_\bullet(BG^1)} \, .
$$
Since $C_n(K)$ is the free group with basis in $1$-$1$
correspondence with the $n$-cells of $K$, we obtain the diagram
\begin{equation}\label{diag}
\xymatrix{
& \vdots \ar[d] & \vdots \ar[d] & \vdots \ar[d] & \\
0 \ar[r] & 0 \ar[d] \ar[r] & C_3(BG)\ar[d] \ar[r] & C_3(BG,BG^1) \ar[d] \ar[r] & 0 \\
0 \ar[r] & 0 \ar[d] \ar[r] & C_2(BG)\ar[d] \ar[r] & C_2(BG,BG^1) \ar[d] \ar[r] & 0 \\
0 \ar[r] & C_1(BG^1)\ar[d] \ar[r] & C_1(BG)\ar[d] \ar[r] & 0 \ar[d] \ar[r] & 0 \\
0 \ar[r] & C_0(BG^1)\ar[d] \ar[r] & C_0(BG) \ar[d] \ar[r] &  0\ar[d] \ar[r] & 0 \\
&           0 & 0& 0
}
\end{equation}
where all rows are exact. Using the isomorphisms  $H_n(K)\cong H_n(C_\bullet (K))$, the long exact homology sequence 
of the pair $(BG,BG^1)$ and the long
exact homology sequence associated to the previous diagram, we obtain isomorphisms
$$
H_n(C_\bullet (BG,BG^1))\cong H_n(BG,BG^1)\, , \quad \forall n\, .
$$
Consider now the morphism $C_2(BG)\to \frac{R}{[F,R]}$ induced by sending any element of  $C_2(BG)$ to the homotopy class 
of its boundary. By \eqref{diag} we have isomorphisms   $C_n(BG,BG^1) \to C_n(BG)$, $n\geq 2$, and so we obtain a group homomorphism:
\begin{equation}\label{hopf+}
C_2(BG,BG^1)\to  \frac{R}{[F,R]}\, .
\end{equation}
By \cite[Satz I.]{Hopf} it follows that the kernel 
of \eqref{hopf+} is $\partial \left( C_3(BG,BG^1)\right)=\partial \left( C_3(BG)\right)$. So we get the homomorphism
\begin{equation}\label{hopf++}
H_2(BG,BG^1) \to \frac{R}{[F,R]}\, .
\end{equation}
By construction, the diagram in the statement commutes and \eqref{hopf++} is an isomorphism by the $5$-lemma.
\end{proof}

Let now $p\colon C \to C'$ be a $G$-covering branched at $y_1, \dots , y_d \in C'$. Fix once and for all a point $y_0 \in C' \setminus \{ y_1, \dots , y_d \}$
and a geometric basis $\ga_1, \ldots , \ga_d, \a_1, \b_1, \ldots , \a_{g'}, \b_{g'}$ of $\pi_1 (C'\setminus \{ y_1, \dots y_d \}, y_0)$. The monodromy of $p$ evaluated at the chosen geometric basis
gives the Hurwitz generating system $v \in HS(G;g',d)$,
well defined up to conjugation, which in turn determines $p$ (by Riemann's existence theorem). For $\Gamma =\Gamma_v$, the $\e$-invariant of $v$
is an element of 
$$
K_\Gamma:=\frac{R}{R_\Gamma}=\left( \frac{R}{[F,R]}\right)/_{\langle \langle \hat{a}\hat{b}\hat{c}^{-1}\hat{b}^{-1}\, | \, a\in \Gamma \, , \, ab=bc \rangle \rangle }\, .
$$
If $p$ is unramified, then, under the identification $ \frac{R\cap [F,F]}{[F,R]} \cong H_2(BG,\ZZ)$, 
$\e (v)\in \frac{R\cap [F,F]}{[F,R]}$ coincides with the image $Bp_*[C'] \in H_2(BG,\ZZ)$ of the fundamental class 
of $C'$ under the morphism induced in homology by a classifying map $BpÊ\colon C' \to BG$ of $p$.
We want to extend this topological interpretation of the $\e$-invariant to the ramified case.

By definition, any loop $\ga_i$ of the geometric basis consists of a path $\tilde{\ga}_i$ from $y_0$ to a point $z_i$ near $y_i$ and of a small loop around $y_i$.
Let $\Sigma$ be the Riemann surface (with boundary) obtained from $C'$ after removing the open discs surrounded by these small loops. 
Fix once and for all a CW-decomposition of $\Sigma$ as follows. The $0$-skeleton $\Sigma^0$ consists of  the  point $y_0$ and, for any 
$i=1, \dots , d$, the intersection $z_i$ between $\tilde{\gamma}_i$ and  the small circle of $\gamma_i$ around $y_i$.
The $1$-skeleton $\Sigma^1$ is given by  the geometric basis and the $2$-skeleton $\Sigma^2$ consists of one cell (see Figure 1).

\begin{figure}
	\centering
  \includegraphics[width=0.8\textwidth]{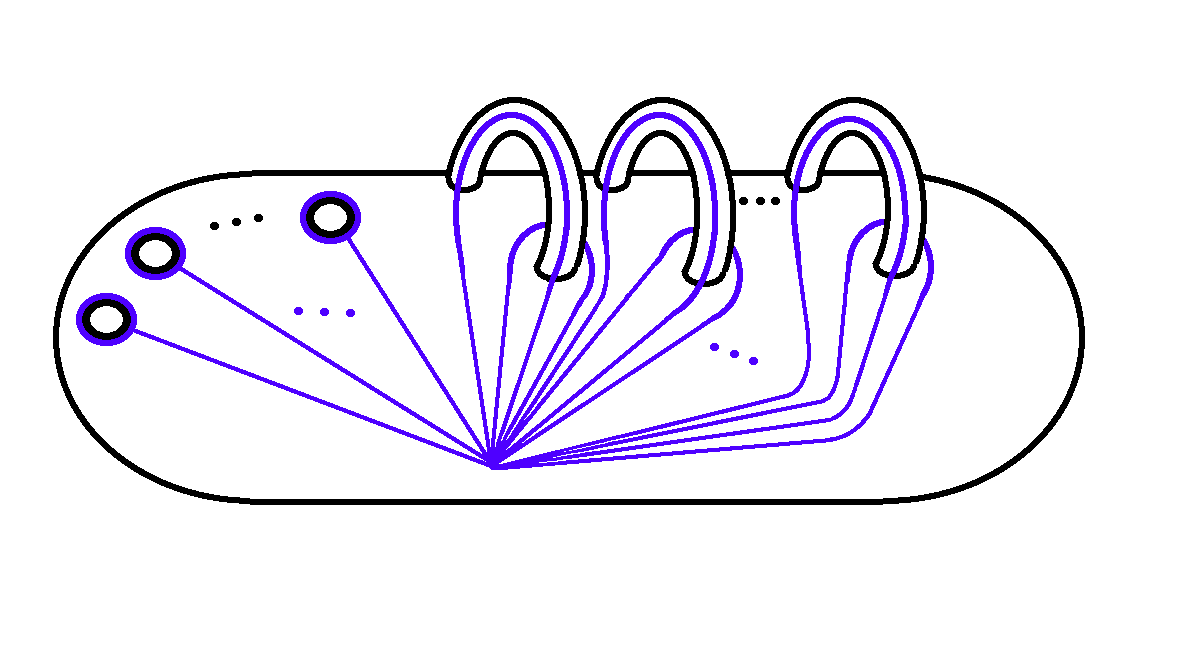}
  \caption{}
	\label{fig1}
\end{figure}

The restriction $p_\Sigma$ of $p\colon C \to C'$ to $p^{-1}(\Sigma)$ is an unramified $G$-covering of $\Sigma$ and hence corresponds to a continuous map
$Bp_\Sigma \colon \Sigma \to BG$, well defined up to homotopy. Let $Bp_1 \colon \Sigma \to BG$ be a cellular approximation of $Bp_\Sigma$.
Since $Bp_1$ can be regarded as a map of pairs $Bp_1 \colon (\Sigma, \partial \Sigma)\to (BG,BG^1)$, the push-forward of the fundamental 
(orientation) class $[\Sigma, \partial \Sigma]$ gives an element
$$
{Bp_1}_*  [\Sigma, \partial \Sigma] \in H_2(BG,BG^1)=\frac{R}{[F,R]} \, .
$$
This element depends on the chosen cellular approximation $Bp_1$ of $Bp_\Sigma$, but its image in $K_\Gamma$
does not (see Lemma \ref{bp1}). 

%For any Riemann surface $\Sigma$ with boundary $\partial \Sigma$ and any \'etale $G$-covering $p\colon C^0\to \Sigma$,
%we associate a class 
%$$
%Bp_*[\Sigma, \partial \Sigma] \in K_\Gamma 
%$$
%as follows, where $[\Sigma, \partial \Sigma]\in H_2(\Sigma, \partial \Sigma, \ZZ)$ is the generator given by the orientation.
%Fix a CW-decomposition of $\Sigma$ of dimension $2$ such that $\Sigma^0$ consists of  the  point $y_0$ and, for any 
%$i=1, \dots , d$, the intersection of $\tilde{\gamma}_i$ with  the small circle of $\gamma_i$ around $y_i$
%(cf. the definition of the geometric basis).
%The $1$-skeleton $\Sigma^1$ consists of the geometric basis and the $2$-skeleton $\Sigma^2$ consists of one cell (see Figure ??).
%Notice that the boundary $\partial \Sigma$ is the union of the small circles around the branch points $y_1, \dots , y_d$
%used in the definition of the geometric basis. 
%%
%%For instance we can fix a geometric basis of $\Sigma$ such that $\partial \Sigma = \bigcup_1^d \gamma_i$
%%and then cut $\Sigma$ along the $\a_j$'s and $\b_j$'s.
%Consider a continuous map $Bp \colon \Sigma \to BG$, determined by $p$ up to homotopy.
%Then take a cellular approximation $Bp_1 \colon (\Sigma, \partial \Sigma) \to (BG, BG^1)$ of $Bp$.
%We obtain a class 
%$$
%{Bp_1}_*  [\Sigma, \partial \Sigma] \in H_2(BG,BG^1)=\frac{R}{[F,R]} \, ,
%$$
%which depends on the chosen cellular approximation $Bp_1$. 

In order to compute ${Bp_1}_*  [\Sigma, \partial \Sigma]$ it is useful to recall the construction of  $Bp_1$.
As before, let 
$$
v= (c_1, \dots , c_d, a_1, b_1, \dots , a_{g'},b_{g'}) \in G^{d+2g'}\, 
$$
be a Hurwitz generating system of $p$ with respect to the given geometric basis.
Consider the relation 
$$
r:= \widehat{c_1} \cdot \ldots \cdot \widehat{c_d} \cdot \prod_{j=1}^{g'}[\widehat{a_j}, \widehat{b_j} ] \in \pi_1(BG^1) \, 
$$
and a continuous map $h_r\colon \Sigma^1 \to BG^1$ representing $r$. By the construction of $BG$ there is a $2$-cell $E_r$
attached along $h_r$. Choose a homeomorphism $E_r\cong \Sigma^2$.  So we get a continuous map $Bp_1 \colon \Sigma \to BG$.
To prove that $Bp_1$ is homotopic to $Bp$, we need to prove that the pull-back $Bp_1^*(EG \to BG)$ under $Bp_1$ of $EG \to BG$ is $p_\Sigma \colon p^{-1}(\Sigma) \to \Sigma$.
This follows from the fact that $EG \to BG$ is the universal cover and from the fact that the monodromy vector
of $Bp_1^*(EG \to BG)$ with respect to the given geometric basis  coincides with $v$ up to conjugation.
Moreover, by the proof of Lemma \ref{relhopf}, it follows that  under the isomorphism $H_2(BG,BG^1)\cong \frac{R}{[F,R]}$
\begin{equation}\label{bp*}
{Bp_1}_*  [\Sigma, \partial \Sigma]= \prod_1^d\widehat{c_i} \cdot \prod_1^{g'}[\widehat{a_j},\widehat{b_j}] \in \frac{R}{[F,R]}\, .
\end{equation}

\begin{lemma}\label{bp1}
Let $\Gamma =\Gamma_v \subset G$, where $v$ is the Hurwitz generating system of $p$
with respect to the given geometric basis.
Then the image of ${Bp_1}_* [\Sigma, \partial \Sigma]$ in $K_\Gamma$ does not depend 
on the cellular approximation $Bp_1$ of $Bp$. 
Denote this element $Bp_*[\Sigma, \partial \Sigma] \in K_\Gamma$. Then, by \eqref{bp*} we have:
$$
Bp_*[\Sigma, \partial \Sigma] =\e(v)\in K_\Gamma \, .
$$
\end{lemma}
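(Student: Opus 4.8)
\emph{Overview.} The statement has two parts: the identity $Bp_*[\Sigma,\partial\Sigma]=\e(v)$, and the independence of the class in $K_\Gamma$ from the chosen cellular approximation $Bp_1$. The substance is the independence; granting it, the identity is immediate. Indeed, by \eqref{bp*} the class $(Bp_1)_*[\Sigma,\partial\Sigma]\in H_2(BG,BG^1)\cong R/[F,R]$ (Lemma \ref{relhopf}) is represented by $\prod_1^d\widehat{c_i}\cdot\prod_1^{g'}[\widehat{a_j},\widehat{b_j}]$, and its image under the projection $R/[F,R]\to R/R_\Gamma=K_\Gamma$ is, by Definition \ref{ev}, exactly the evaluation $\e(v)$ of the tautological lift $\hat v$ in $G_\Gamma$. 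So I only have to prove independence.

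\emph{Reduction to boundary contributions.} Let $Bp_1,Bp_1'$ be two cellular approximations of $Bp_\Sigma$. Since $BG$ is a $K(G,1)$ and both induce the monodromy $\mu$ on $\pi_1(\Sigma)$, they are homotopic, and by the cellular approximation theorem the homotopy may be taken cellular; call it $H\colon\Sigma\times I\to BG$. I would apply the prism (chain–homotopy) operator $P$ attached to $H$, satisfying $\partial P+P\partial=(Bp_1')_\#-(Bp_1)_\#$, to the single top cell $\sigma$ of $\Sigma$. In cellular chains $\partial\sigma=\sum_{i=1}^d\delta_i$ is the sum of the boundary circles, so $\partial P\sigma$ is a boundary and vanishes in $H_2(BG,BG^1)$, leaving
\[
(Bp_1')_*[\Sigma,\partial\Sigma]-(Bp_1)_*[\Sigma,\partial\Sigma]=\sum_{i=1}^d[\rho_i]\in R/[F,R],
\]
where $\rho_i\in R$ is the relation carried by the annulus $H(\delta_i\times I)$. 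Writing $\sigma_i=Bp_1(\delta_i)$, $\sigma_i'=Bp_1'(\delta_i)$ and $\eta_i=H(z_i\times I)$ for the connecting (side) path, one reads off $\rho_i=\eta_i^{-1}\sigma_i\,\eta_i\,(\sigma_i')^{-1}$.

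\emph{The crux.} It remains to show $\rho_i\in R_\Gamma$ for every $i$, so that the difference above dies in $K_\Gamma$. Here the choice $\Gamma=\Gamma_v$ enters decisively: the circle $\delta_i$ carries the \emph{local} monodromy $c_i$, so $\sigma_i,\sigma_i'$ are conjugates of tautological generators of elements of the conjugacy class of $c_i\subset\Gamma$. I would compute $\rho_i$ in $G_\Gamma=F/R_\Gamma$, using that $K_\Gamma=R/R_\Gamma$ is central in $G_\Gamma$ (\cite[Lemma 3.2]{CLP12}) together with the defining relations $\widehat b^{-1}\widehat a\,\widehat b=\widehat{\,b^{-1}ab\,}$ of $R_\Gamma$, valid for $a\in\Gamma$ and all $b\in G$: these reduce $\sigma_i,\sigma_i'$ to $\widehat{\bar c_i},\widehat{\bar c_i'}$ with $\bar c_i,\bar c_i'$ conjugate to $c_i$, and the conjugation by $\eta_i$ carries $\widehat{\bar c_i}$ to $\widehat{\bar c_i'}$; hence $\rho_i=1$ in $G_\Gamma$, i.e. $\rho_i\in R_\Gamma$. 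The genus part is inert, for two reasons I would record: any central ($K_\Gamma$-valued) corrections to the lifts of $a_j,b_j$ cancel out of the commutators $[\widehat{a_j},\widehat{b_j}]$, and $[F,R]\subseteq R_\Gamma$ makes conjugation by $F$ trivial on $R/R_\Gamma$.

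\emph{Main obstacle.} The delicate point is precisely the reduction of $\sigma_i,\sigma_i'$ to conjugates of $\widehat{c_i}$: for a word of image $c_i$ that is \emph{not} of this conjugate form, $[\rho_i]$ need not lie in $R_\Gamma$. I would secure this by first homotoping $Bp_\Sigma$ to its standard cellular form on a collar of $\partial\Sigma$ and then performing the cellular approximation relative to $\partial\Sigma$, so that $\sigma_i=\sigma_i'=\widehat{c_i}$ and $\eta_i$ is constant, whence $\rho_i=1$ trivially; the only residual freedom along $\partial\Sigma$ is a change of conjugacy representative of $c_i\in\Gamma$ induced by the connecting path, which is exactly what the generators $\widehat a\widehat b\widehat c^{-1}\widehat b^{-1}$ of $R_\Gamma$ absorb.
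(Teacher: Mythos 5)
Your proof follows the paper's argument essentially verbatim: both take a cellular homotopy between the two approximations, apply the chain-homotopy (prism) formula to conclude that the difference of the pushed-forward fundamental classes in $H_2(BG,BG^1)\cong R/[F,R]$ is carried by the boundary cylinders $I\times \delta_i$, and identify each cylinder's contribution as a conjugation relation $\hat a\hat b\hat c^{-1}\hat b^{-1}$ with $a,c$ in the conjugacy class of $c_i\subset\Gamma$, hence lying in $R_\Gamma$. The only divergence is that you explicitly flag, and repair by normalizing the approximations on a collar of $\partial\Sigma$, the point that an arbitrary cellular approximation could send $\delta_i$ to a longer word in $F$ rather than a single generator --- a subtlety the paper's proof passes over silently.
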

\begin{proof}
Let $Bp_2 \colon (\Sigma,\partial \Sigma)\to (BG,BG^1)$ be another cellular approximation of $Bp_\Sigma$.
Then there exists an homotopy $F \colon I\times \Sigma \to BG$ such that $F(0,x)=Bp_1(x)$ and 
$F(1,x)=Bp_2(x)$, $\forall \, x\in \Sigma$, where $I=[0,1]$. Without loss of generality we can assume that $F$ is cellular 
with respect to the standard  CW-decomposition of $I\times\Sigma$  (\cite[ (4.7)]{Whitehead}):
$$
\left( I\times\Sigma \right)^n =\bigcup_{i=0}^n I^i \times \Sigma^{n-i} \, ,
$$
where $I$ is seen as a $1$-dimensional CW-complex with $I^0=\{ 0\} \cup \{ 1 \}$.

Consider now the chain homotopy 
$$
\varphi_n \colon C_n(BG) \to C_{n+1}(BG)
$$ 
between the chain maps ${Bp_1}_{\#}$ and ${Bp_2}_{\#}$ associated to $F$. Then we have (cf. \cite[(7.4.1)]{Massey}):
$$
{Bp_1}_{\#} - {Bp_2}_{\#} = \partial_{n+1}\circ \varphi_n + \varphi_{n-1}\circ \partial_n \, .
$$
>From this it follows that 
$$
{Bp_1}_* [\Sigma, \partial \Sigma] - {Bp_2}_* [\Sigma, \partial \Sigma] = \left[ \varphi_{1}(\partial \Sigma) \right] \in H_2(BG,BG^1) \, .
$$
Notice that 
$$
\left[ \varphi_{1}(\partial \Sigma) \right] = \left( F_{| I\times \partial \Sigma} \right)_*[I\times \partial \Sigma, \partial (I\times \partial \Sigma)] \in H_2(BG,BG^1) \, .
$$
The claim now follows from the fact that $I\times \partial \Sigma$ is the union of cylinders $I\times S^1$ and 
$$
\left( F_{| I\times \partial \Sigma} \right)_*[I\times S^1, \partial (I\times S^1)] = \hat{a}\hat{b}\hat{c}^{-1}\hat{b}^{-1} \, ,
$$
where $\hat{a}$ (resp. $\hat{c}$) is the image of the fundamental class of $\{ 0 \} \times S^1$ (resp. $\{ 1 \} \times S^1$) under $F$,
$\hat{b}$ is the image of $I\times \{ z_i \}$ under $F$, for some $z_i\in \Sigma^0$. 
Notice that, since $\left( F_{| I\times \partial \Sigma} \right)_*[I\times S^1, \partial (I\times S^1)] $ is the class of a $2$-cell,
$\hat{a}\hat{b}\hat{c}^{-1}\hat{b}^{-1}$ must be a relation for $G$.

\end{proof}

%%%%%%%%%%%%%%%%%%%%%%%%%%%%%%%%
\section{The main Theorem}
In this Section we prove our main result. Roughly speaking, it says that for $g'$ sufficiently large 
the map given by the  $\hat{\e}$-invariant   \eqref{ehat} 
is injective, its image is independent of $g'$ and coincides with 
the classes of admissible $\nu$-types. We begin then  with the following
\begin{definition}\label{Nielsen}
Let $v\in HS(G;g',d)$ and let $\nu(v) \in \bigoplus_{\mathcal{C}}\ZZ\langle \mathcal{C}\rangle$ ($\mathcal{C}$ runs over the set 
$Conj (G)$ of conjugacy classes of $G$)
 be the vector whose $\mathcal{C}$-component
is the number of $v_j$, $j\leq d$, which belong to $\mathcal{C}$.\\
The map
\begin{equation}\label{nu}
\nu \colon HS(G;g',d) \to \bigoplus_{\mathcal{C}}\ZZ\langle \mathcal{C}\rangle 
\end{equation}
obtained in this way  induces a map 
$$
\tilde{\nu}\colon HS(G;g',d)/_{\Aut(G)} \to \left( \bigoplus_{\mathcal{C}} \ZZ \langle  \mathcal{C} \rangle \right)/_{\Aut(G)}
$$
which is $Map(g',d)$-invariant, therefore we get  a map 
$$
\hat{\nu}\colon \left( HS(G;g',d)/_{ \Aut(G)}\right)/_{Map(g',d)} \to \left( \bigoplus_{\mathcal{C}}\ZZ\langle \mathcal{C}\rangle \right)/_{\Aut(G)} \, .
$$
For any $v\in HS(G;g',d)$, we call $\hat{\nu}(v)$ the (unmarked) \textbf{$\nu$-type} of $v$ ( the marked version
$\nu(v)$ is called  shape in \cite{FV}).
\end{definition}

\begin{rem}\label{evsnu}
Let $v\in HS(G;g',d)$  and let $\Ga_v\subset G$ be the union of the conjugacy classes of the $v_j$, $j\leq d$.
The abelianization $G_{\Ga_v}^{ab}$ of $G_{\Ga_v}$ can be described as follows:
$$
G_{\Ga_v}^{ab}\cong \bigoplus_{\mathcal{C}\subset \Ga}\ZZ\langle \mathcal{C} \rangle \bigoplus_{g\in G\setminus \Ga_v}\ZZ \langle g \rangle \, ,
$$
where $\mathcal{C}$ denotes a conjugacy class of $G$.  Moreover $\nu(v)$ coincides with the vector whose $\mathcal{C}$-components
are the corresponding components of  the image in $G_{\Ga_v}^{ab}$
of $\e(v)\in G_{\Ga_v}$
under the natural homomorphism  $G_{\Ga_v} \to G_{\Ga_v}^{ab}$. It follows that $\nu$ in \eqref{nu}
factors as 
$$
\nu=A \circ \e 
$$
where 
$$
A \colon \coprod_{\Ga} K_\Ga \to \bigoplus_{\mathcal{C}}\ZZ\langle \mathcal{C} \rangle
$$
is induced from the abelianization $G_{\Ga} \to G_{\Ga}^{ab}$. To take into account the automorphisms of $G$
one defines similarly $\widehat{A}$ in such a way that $\widehat{\nu}=\widehat{A} \circ \widehat{\e}$.
 \end{rem}

\begin{definition}\label{admissible}
An element 
$$
\nu = (n_{\mathcal{C}})_\mathcal{C} \in \bigoplus_{\mathcal{C}\not= \{ 1 \}}\ZZ\langle \mathcal{C}\rangle 
$$
is {\bf admissible} if the following equality holds in $G^{ab}$ for its natural $\ZZ$-module structure:
$$
\sum_\mathcal{C} n_{\mathcal{C}} \cdot [\mathcal{C}] =0 \, 
$$
where $[\sC]$ denotes the class of any element of $\sC$ in the abelianization of $G$.

Accordingly we say that $\widehat{\nu} \in  \left( \bigoplus_{\mathcal{C}} \ZZ \langle  \mathcal{C} \rangle \right)/_{\Aut(G)}$
is admissible if it is the class of an admissible element.
\end{definition}

The main result of the paper is then the following
\begin{theo}\label{maintheo}
Let $G$ be a finite group. Then for any $d\in \NN$ there is an integer $s=s(d)$ such that 
$$
\hat{\e} \colon \left( HS(G; g', d)/_{\Aut(G)}\right)/_{Map(g',d)} \to \left( \coprod_\Ga K_\Ga \right)/_{\Aut(G)}\, 
$$
is injective for any $g'>s$. Moreover, for any $g'>s$,  the image of  $\hat{\e}$ is independent of $g'$
and it coincides with the pre-image under $\widehat{A}$ of the admissible $\widehat{\nu}$-types.
\end{theo}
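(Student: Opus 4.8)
The plan is to recast the statement as a theorem about \emph{stable equivalence} of Hurwitz generating systems and then to extract a uniform genus bound, following the algebraic method of Zimmermann rather than the bordism approach used in the unramified case. On the disjoint union $\coprod_{g'} HS(G;g',d)$ I introduce the equivalence relation generated by the $Map(g',d)$-action, the $\Aut(G)$-action, and direct stabilization (adjoining a handle with $a_{g'+1}=b_{g'+1}=1$); call this \emph{stable equivalence}. Since $[\widehat{1},\widehat{1}]=1$, direct stabilization does not alter $\e$, and by the results recalled in Section~2 neither does the $Map(g',d)$- nor the $\Aut(G)$-action; hence $\hat{\e}$ descends to a well-defined map on stable equivalence classes. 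The first and central task is to show that $\e$ is a \emph{complete} stable invariant: two Hurwitz generating systems $v,w$ with $\hat{\e}(v)=\hat{\e}(w)$ are stably equivalent.

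For this completeness step I would work inside $G_\Ga$. After applying a suitable $f\in\Aut(G)$ so that $\Ga_v=\Ga_w=:\Ga$, the hypothesis says that the tautological lifts $\hat{v}$ and $\hat{w}$ have the same evaluation in the central subgroup $K_\Ga\subset G_\Ga$. The strategy is to bring an arbitrary Hurwitz generating system, via Hurwitz moves and handle additions realized by $Map(g',d)$ for $g'$ large, into a normal form depending only on its $\nu$-type and on $\e(v)\in K_\Ga$. Concretely, one first uses handles to move the ramification data into a standard configuration, absorbing the conjugation ambiguities that are precisely the relations $\hat{a}\hat{b}\hat{c}^{-1}\hat{b}^{-1}$ defining $R_\Ga$, and then applies the unramified stabilization result of Livingston and Zimmermann to the central kernel, transported along the surjection $H_2(G,\ZZ)\to H_{2,\Ga}(G)$ of \eqref{h2toh2ga}, to kill the remaining central discrepancy. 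This is the step I expect to be the main obstacle: the bookkeeping of Hurwitz moves in the ramified case is delicate, and one must check that each algebraic relation imposed in passing from $R$ to $R_\Ga$ is genuinely realized by an admissible sequence of geometric moves.

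The uniform bound comes next. The completeness argument consumes only a number of auxiliary handles bounded in terms of a generating set of $G$ and of $d$, but \emph{independent} of the starting genus, so there is an integer $s=s(d)$ such that for $g'>s$ every stable equivalence between systems of genus $g'$ is already realized without adding handles, i.e.\ by $Map(g',d)$ together with $\Aut(G)$. This yields the injectivity of $\hat{\e}$ for all $g'>s$.

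Finally I would identify the image. Admissibility is necessary: if $v\in HS(G;g',d)$ then $\prod_i c_i=\prod_j[a_j,b_j]^{-1}\in[G,G]$, whence $\sum_\sC n_\sC[\sC]=0$ in $G^{ab}$, so $\widehat{A}(\hat{\e}(v))$ is an admissible $\widehat{\nu}$-type. Conversely, given any class in $\coprod_\Ga K_\Ga$ whose $\widehat{A}$-image is admissible, I would construct a realizing system: choose local monodromies $c_1,\dots,c_d$ with the prescribed $\nu$-type; admissibility gives $\prod_i c_i\in[G,G]$, so it is a product of commutators, producing a Hurwitz vector satisfying~(iii), and adjoining further handles (for $g'$ large) both enforces generation~(ii) and realizes the prescribed value of $\e$ in $K_\Ga$, using that the handle entries sweep out all of $H_{2,\Ga}(G)$ within a fixed $\nu$-type fibre. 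Since neither necessity nor this surjectivity refers to $g'$ beyond the requirement $g'>s$, the image is the same subset of $\coprod_\Ga K_\Ga$ for every $g'>s$, namely the pre-image under $\widehat{A}$ of the admissible $\widehat{\nu}$-types.
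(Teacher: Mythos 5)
Your overall architecture --- reduce to the statement that $\e$ is a complete stable invariant (this is Theorem \ref{main} of the paper, proved by Zimmermann's algebraic method), then extract a genus bound, then identify the image via admissibility and the surjection $H_2(G,\ZZ)\to H_{2,\Ga}(G)$ --- coincides with the paper's, and your identification of the image is essentially the paper's argument in outline.

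The gap is in how you obtain $s(d)$ and deduce injectivity. You assert that the completeness argument ``consumes only a number of auxiliary handles bounded in terms of a generating set of $G$ and of $d$, but independent of the starting genus,'' and conclude that for $g'>s$ ``every stable equivalence between systems of genus $g'$ is already realized without adding handles.'' Neither half of this is justified. First, in the actual proof of stable equivalence the number of handles added is governed by the integers $N$ and $M$ that appear when the identity $\e(v)=\e(w)$ in $K_\Ga$ is unwound modulo $[F,R]$ (the number of elementary relations $\hat{a}\hat{b}\hat{c}^{-1}\hat{b}^{-1}$ needed, and the number of commutators $[\widehat{\x_m},\widehat{\eta_m}]$ they assemble into); these depend on the particular pair $(v,w)$, and no uniform bound is established --- indeed the paper explicitly remarks in a footnote that it does \emph{not} obtain an effective $s$. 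Second, and more importantly, even granting a uniform handle bound $h_0$, knowing $[v^{h_0}]=[w^{h_0}]$ in genus $g'+h_0$ does not by itself give $[v]=[w]$ in genus $g'$: you need a destabilization statement, i.e.\ injectivity of the map $\varsigma_{g'}$ induced by adding a trivial handle. The paper gets this by a pigeonhole argument: $\varsigma_{g'}$ is \emph{surjective} for $g'\ge |G|$ (every system of large genus is equivalent to a stabilization, by the extension of Dunfield--Thurston's Prop.\ 6.16 to non-free actions), so the cardinalities of the finite sets $\left( HS(G;g',d)/_{\Aut(G)}\right)/_{Map(g',d)}$ are non-increasing in $g'$ and hence eventually constant, whence $\varsigma_{g'}$ is bijective for $g'>s$; injectivity of $\hat{\e}$ then follows by combining Theorem \ref{main} with this bijectivity. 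This surjectivity-plus-finiteness step is the actual source of $s(d)$ and is missing from your argument. It is also needed, and likewise missing, at the end of your image identification, where you realize a prescribed value of $\e$ by concatenating with an unramified system of large genus and must then descend back to a fixed genus $g'>s$.
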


\medskip

The main tool to understand the set $\left( HS(G; g', d)/_{\Aut(G)}\right)/_{Map(g',d)} $
is provided by the so-called  {\it stabilization}, which we are going
once more to review.
For any Hurwitz generating system
$$
v=(c_1, \ldots , c_d , a_1, b_1, \ldots , a_{g'}, b_{g'}) \in HS(G;g',d)
$$
define the ($h$-)stabilization $v^h$ of $v$ inductively by 
$$
v^0=v \, , \quad v^1=(c_1, \ldots , c_d , a_1, b_1, \ldots , a_{g'}, b_{g'}, 1,1)\, , \quad  v^h=(v^{h-1})^1 \, , \quad  \forall \, h \in \NN \, .
$$
Topologically, if $v$ corresponds to the monodromy
$\mu \colon \pi_1(C') \to G$, then $v^h$ corresponds to the monodromy
$\mu^h \colon \pi_1(C' \# C'') \to G$ obtained by extending $\mu$ by $1$ on the elements of $\pi_1(C'')$, where 
$C''$ is an oriented surface of genus $h$ and $C' \# C''$ is the connected sum of $C'$ with $C''$.  \\

It is easy to see that stabilization satisfies the following properties:
the $\e$-invariant does not change under stabilization,
\begin{equation}\label{einvstab}
\e (v) = \e (v^h)\, , \quad \forall \, v \, , h \, ;
\end{equation}
it respects the equivalence relation given by the actions of  $\Aut (G)$ and ${Map(g',d)}$,  therefore we get maps
\begin{align}\label{stabmod}
\left( HS(G;g',d)/_{\Aut (G)} \right) /_{Map(g',d)} & \to \left( HS(G;g'+h,d)/_{\Aut (G)} \right) /_{Map(g'+h,d)} \, , \quad \forall \, g', h, d \\
[v] & \mapsto [v^h] \, . \nonumber
\end{align}

Moreover, the $\e$-invariant is stably a fine invariant. This is the content of the following theorem that 
extends to the ramified case the analogous result of Livingston \cite{Livingston}
for non-ramified group actions (see also \cite{DT, Zimmer}).
\begin{theo}\label{main}
Let $G$ be a finite group and let $v , w \in HS(G;g',d)$ such that $\nu(v)=\nu(w)$ (in particular 
$ \Gamma_{v}=\Gamma_{w}=\Gamma$). If
$$
 \quad \e(v)=\e(w)\in K_\Gamma \, ,
$$
then $\exists h \in \NN$ such that the classes of  $v^h$ and $w^h$ in $\left( HS(G;g'+h,d)/_{\Aut(G)} \right)/_{Map(g'+h,d)}$
coincide.
\end{theo}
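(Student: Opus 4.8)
The plan is to adapt Zimmermann's algebraic proof of Livingston's theorem (\cite{Livingston}, \cite{Zimmer}) to the ramified setting, working throughout with the tautological lifts $\hat v, \hat w$ in $G_\Gamma$ and with $\e(v) = ev(\hat v) \in K_\Gamma = R/R_\Gamma$. The underlying principle is that $K_\Gamma$ is a quotient of $R$ by the two families of relators generating $R_\Gamma$, namely $[F,R]$ and the branch relators $\hat a \hat b \hat c^{-1}\hat b^{-1}$; the whole proof amounts to showing that these two families are realized, after stabilization, by Mapping Class Group moves. Since such moves preserve $\e$ by \cite[Prop.\ 3.6]{CLP12}, establishing the converse makes $\e$ a \emph{complete} stable invariant, which is exactly the assertion of the theorem.

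First I would assemble the repertoire of moves. The braid (Hurwitz) moves on consecutive branch points and the standard handle moves are Mapping Class Group moves. The decisive additional move is enabled by stabilization: I would prove, as a lemma, that in $HS(G; g'+1, d)$ a system with a trivially appended handle $(1,1)$ is $Map$-equivalent to the system obtained by replacing a single branch point $c_i$ with $b c_i b^{-1}$, the new handle recording the conjugating element $b$. On the level of lifts this move corresponds precisely to the branch relator $\widehat{bc_ib^{-1}}\,\hat b\,\hat c_i^{-1}\,\hat b^{-1}$ generating $R_\Gamma$, i.e.\ to the identity $\widehat{bc_ib^{-1}} = \hat b\,\hat c_i\,\hat b^{-1}$ in $G_\Gamma$; this is the geometric incarnation of that half of $R_\Gamma$.

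Using these moves I would normalize the branch data. Since $\nu(v) = \nu(w)$, I first reorder by braid moves so that the $i$-th branch points of $v$ and of $w$ lie in the same conjugacy class, and then, after stabilizing, apply the conjugation move to carry both branch tuples to a fixed normal form $\mathbf{x} = (x_1, \dots, x_d)$ determined only by $\nu$. As every move preserves $\e$, the two systems now share the same branch part and still satisfy $\e(v) = \e(w)$. Cancelling the common factor $\prod \hat x_i$ from this equation in the group $G_\Gamma$ forces the lifted commutator products of the two handle parts to agree:
$$
\prod_1^{g'} [\hat a_j^{(v)}, \hat b_j^{(v)}] = \prod_1^{g'} [\hat a_j^{(w)}, \hat b_j^{(w)}] \in G_\Gamma \, .
$$
The problem is thereby reduced to a relative version of Livingston's theorem: two handle systems with fixed boundary $\prod x_i$ and equal lifted commutator product become $Map$-equivalent after stabilization.

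I expect this relative statement to be the main obstacle, since the unramified result cannot be quoted verbatim in the presence of a nontrivial boundary $\prod x_i$. To prove it I would transcribe Zimmermann's combinatorial argument: the handle moves together with the creation and cancellation of trivial handles realize, on the level of lifts in $F$, exactly the relator subgroup $[F,R]$ modulo the (now fixed) value of the commutator product in $G$. The centrality of $K_\Gamma$ in $G_\Gamma$ (\cite[Lemma 3.2]{CLP12}) is what makes the bookkeeping go through, as it allows the successive correction terms produced by the commutator identities to be collected into a single central factor, which is seen to be trivial precisely when the displayed lifted commutator products coincide. Carrying both handle parts to a common normal form via these moves completes the reduction and produces the required $h$ with $v^h \sim w^h$.
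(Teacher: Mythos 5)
Your overall architecture matches the paper's: normalize the branch parts after stabilization (the conjugation move you describe is exactly the paper's Lemma \ref{zimmer+} together with Proposition \ref{1st step}), then invoke a relative, boundary-fixing version of Zimmermann's argument (the paper's Proposition \ref{sc}). However, there is a genuine gap between these two stages, and it is located exactly where the paper spends most of its effort. After you cancel the common branch factor, the two lifted commutator products agree only in $G_\Gamma$, i.e.\ modulo $R_\Gamma$. But the relative Livingston/Zimmermann step you describe realizes, by handle moves, only discrepancies lying in $[F,R]$. Since $R_\Gamma$ is generated by $[F,R]$ \emph{together with} the branch relators $\hat{a}\hat{b}\hat{c}^{-1}\hat{b}^{-1}$, the residual discrepancy between $ev(\hat v)$ and $ev(\hat w)$ modulo $[F,R]$ is an essentially arbitrary product $\prod_{\ell}\bigl(\widehat{x_\ell}\widehat{y_\ell}\widehat{z_\ell}^{-1}\widehat{y_\ell}^{-1}\bigr)^{\pm 1}$ with $x_\ell\in\Gamma$, and your argument gives no mechanism for absorbing it. Your conjugation move does change $ev$ by a branch relator, but only by one of a constrained shape tied to the current branch entries, and you use it solely to reach the normal form $\mathbf{x}$; nothing guarantees that the relators accumulated along the way equal the given discrepancy.

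Closing this gap is the content of the paper's ``2nd step.'' One first passes to $F^{ab}$, where the equality of branch parts forces $\prod_\ell \widehat{a_\ell}\widehat{c_\ell}^{-1}=1$, hence a permutation $\tau$ with $c_\ell=a_{\tau(\ell)}$; each cycle of $\tau$ then telescopes (Lemma \ref{=modfr}) into a single commutator $[\widehat{\x_m},\widehat{\eta_m}]$ with $\x_m\in\Gamma$ and $[\x_m,\eta_m]=1$ in $G$ (Lemma \ref{2ndstepl1}). Each such commutator is then realized, after adding handles, by an explicit composition of mapping classes --- bringing a branch point conjugate to $\x_m$ into position, loading $\eta_m$ onto a stabilized handle, and applying the move \eqref{CLP} --- whose effect on $ev(\hat v)$ modulo $[F,R]$ is computed in Lemma \ref{2nd step l3}. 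Only after this reduction to $N=0$ does Proposition \ref{sc} apply. Without this step your proof establishes the theorem only under the stronger hypothesis that the evaluations agree modulo $[F,R]$ (rather than modulo $R_\Gamma$), which is not what is claimed. A secondary, more minor omission: Proposition \ref{sc} also requires the handle entries of both systems to generate $G$, which must be arranged beforehand via Lemma \ref{zimmer+}.
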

\noindent We postpone the proof of this theorem to the next section. Here we use it to give a proof of  Thm. \ref{maintheo}.

\medskip

\noindent {\it Proof of Thm. \ref{maintheo}.}
First of all we have that, for $g' \geq |G|$, any $v\in HS(G;g'+1,d)$ is a stabilization, that is:
the map
\begin{align*}
\varsigma_{g'} \colon HS(G;g',d))/_{Map(g',d)}  & \to HS(G;g'+1,d))/_{Map(g' + 1, d)}  \\
{\rm induced \  by } \ 
v & \mapsto v^1 
\end{align*}
is surjective.
This follows from the proof of  \cite[Prop. 6.16]{DT}, where the result is stated 
for free group actions, but the same proof works also for non-free actions.  \\
As a consequence we have that  \eqref{stabmod} is surjective for $g' \geq |G|$.

Since the sets $\left( HS(G;g',d)/_{\Aut (G)} \right) /_{Map(g',d)} $
are finite sets, there  exists an integer 
\footnote{  to prove that $s$ can be explicitly given it would suffice to show that  once $\varsigma_{g'} $ is bijective, then also $\varsigma_{g''} $ is
bijective for $ g '' > g'$. }
$s=s(d)$ such that 
\eqref{stabmod} is bijective for any $g'>s$, and $h\geq 1$.

Let now $g' >s$ and let  $v,w\in HS(G;g',d)$ with $\e(v)=\e(w)$. From Thm. \ref{main} there exists  $h$ with $[v^h]=[w^h] \in 
\left( HS(G;g'+h,d)/_{\Aut (G)} \right) /_{Map(g'+h,d)}$. Since stabilization is bijective in this range we get
$[v]=[w] \in \left( HS(G;g',d)/_{\Aut (G)} \right) /_{Map(g',d)}$. Hence $\widehat{\e}$ is injective for $g'>s$.\\
The fact that  ${\rm Im} (\widehat{\e})$ does not depend on $g'$ now follows  from \eqref{einvstab}.

We now prove that ${\rm Im}(\widehat{\e})$ is the preimage under $\widehat{A}$ of the admissible $\nu$-types,
for any $g'>s$. First notice that for any admissible $\nu =(n_{\mathcal{C}})_{\mathcal{C}} \in \bigoplus_{\mathcal{C}}\ZZ \langle \mathcal{C}\rangle$
there exists
$v\in HS(G;s+1,d)$ with $\nu(v)=\nu$. Indeed for any $(c_1, \ldots , c_d)$ with $\nu(c_1, \ldots , c_d)=\nu$
the condition that $\nu$ is admissible implies that $c_1 \cdot \ldots \cdot c_d \in [G,G]$, hence it is a product of commutators,
$$
c_1 \cdot \ldots \cdot c_d=\left( \prod_{j=1}^r [a_j, b_j] \right)^{-1} \, ,
$$
for some $r\in \NN$. If $c_1, \ldots , c_d, a_1, b_1, \ldots , a_r, b_r$ do not generate $G$,
we add pairs of the form $(g, 1)$, so we obtain a Hurwitz generating
system $w\in HS(G;r,d)$ with $\nu(w)=\nu$.
Moreover we can assume that $r>s$. Since $\varsigma_{g'}$ is surjective for  $g'>s$, we get a Hurwitz system $v\in HS(G;s+1,d)$
whose stabilization is $Map(g',d)$-equivalent to $w$ and the claim follows.  \\
Finally we prove that for any $\xi \in K_{\Ga}$, $\Ga= \Ga_v $, with $A(\xi)=\nu$ there exists $w\in HS(G;s+1,d)$ 
with $\e(w)=\xi$.
Since $A(\xi)=A(\e(v))=\nu$, $\e(v)^{-1} \cdot \xi \in H_{2,\Ga}(G)$. By \eqref{h2toh2ga} there exists $\eta \in H_2(G,\ZZ)$ which maps
to $\e(v)^{-1} \cdot \xi$. Since bordism is the same as homology in dimension $2$ (cf. also \cite[Thm. 6.20]{DT}),
there is a Hurwitz system $v' \in HS(G;h,0)$ such that $\e (v')=\eta \in H_2(G,\ZZ)$. Let $v'' \in HS(G;s+1+h,d)$ be the system
whose first $d+2(s+1)$ components coincide with those of $v$ and the last $2h$ components are those of $v'$, 
then we have: $\e(v'')=\xi$. By \eqref{einvstab} the system $w\in HS(G;s+1,d)$ that maps to $v''$  under $\varsigma_{s+h} \circ \ldots \circ \varsigma_{s+1}$ 
satisfies $\e(w)=\xi$.
This concludes the proof of the theorem. 
 
\qed

%%%%%%%%%%%%%%%%%%%%%%%
\section{Proof of  Theorem \ref{main}}

Let $v,w \in HS(G,g',d)$ be two Hurwitz generating systems with $\nu(v)=\nu(w)$ and $\e(v)=\e(w)$.
If
\begin{align*}
 v =& (v_1,\dots , v_d \, ; \, v_{d+1},v_{d+2},\dots , v_{d+2g'-1}, v_{d+2g'}) \quad \mbox{and} \\
w =& (w_1,\dots , w_d \, ; \, w_{d+1},w_{d+2},\dots , w_{d+2g'-1}, w_{d+2g'}) \, ,
\end{align*}
then, without loss of generality (using braid group moves on the first $d$ components)  we may assume that   $v_i$ is conjugate to $w_i$, $i=1, \dots , d$, and that the following 
equality holds:
\begin{equation}\label{modrg}
 \prod_{i=1}^d \widehat{w_i}\prod_{j=1}^{g'}[\widehat{w_{d+2j-1}},\widehat{w_{d+2j}}] \quad \equiv \quad  
 \prod_{i=1}^d \widehat{v_i}\prod_{j=1}^{g'}[\widehat{v_{d+2j-1}},\widehat{v_{d+2j}}] \quad    (\!\!\!\!\! \mod R_\Gamma) \, . 
\end{equation}
Let us rewrite equation \eqref{modrg} modulo $[F,R]$. This means that there are relations $\widehat{x_\ell}\widehat{y_\ell}\widehat{z_\ell}^{-1}\widehat{y_\ell}^{-1}\in R$, $\ell =1, \dots , N$, and with $x_{\ell} \in \Ga$, such that 

\begin{equation}\label{modfr}
 \prod_{i=1}^d \widehat{w_i}\prod_{j=1}^{g'}[\widehat{w_{d+2j-1}},\widehat{w_{d+2j}}] \equiv
 \prod_{i=1}^d \widehat{v_i}\prod_{j=1}^{g'}[\widehat{v_{d+2j-1}},\widehat{v_{d+2j}}]
 \prod_{\ell=1}^N \left( \widehat{x_\ell}\widehat{y_\ell}\widehat{z_\ell}^{-1}\widehat{y_\ell}^{-1} \right)^{\pm 1}    (\!\!\!\!\! \mod [F,R]) \, . 
\end{equation}

\begin{prop}\label{sc}
Let $v, w\in HS(G,g',d)$ be two Hurwitz generating systems. Assume  $v_i =w_i$, $\forall i=1, \dots , d$,
$ev(\hat{v})\equiv ev(\hat{w})$ $(\!\!\!\! \mod [F,R])$ (i.e., $N=0$ in \eqref{modfr}), and $G=\langle v_{d+1}, \dots , v_{d+2g'}\rangle = 
\langle w_{d+1}, \dots , w_{d+2g'}\rangle$. Then $v$ and $w$ are  stably equivalent.
\end{prop}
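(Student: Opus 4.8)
The plan is to peel off the branch part and reduce the claim to the unramified (closed-surface) genus stabilization of Livingston, the fixed components $c_1,\dots,c_d$ playing the role of inert spectators.

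First I would use the two hypotheses to isolate the handle part. Since $v_i=w_i$ for $i\le d$, the prefactors $\prod_{i=1}^d\widehat{c_i}$ coincide in $F$; call their common value $P$. Writing $ev(\hat v)=P\cdot\prod_{j=1}^{g'}[\widehat{a_j},\widehat{b_j}]$ and $ev(\hat w)=P\cdot\prod_{j=1}^{g'}[\widehat{a'_j},\widehat{b'_j}]$, the assumption $N=0$ says $ev(\hat v)\,ev(\hat w)^{-1}\in[F,R]$; conjugating by $P^{-1}$ and using that $[F,R]$ is normal in $F$ gives
$$\prod_{j=1}^{g'}[\widehat{a_j},\widehat{b_j}]\ \equiv\ \prod_{j=1}^{g'}[\widehat{a'_j},\widehat{b'_j}]\pmod{[F,R]}.$$
Thus the two handle tuples each generate $G$ (hypothesis), have the same product of commutators $(\prod_{i=1}^d c_i)^{-1}=:g_0^{-1}$ in $G$, and have equal tautological lifts in $R/[F,R]$. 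The goal is reduced to: two generating $2g'$-tuples with the fixed product of commutators $g_0^{-1}$ and equal lift modulo $[F,R]$ are stably equivalent under the handle transformations in $Map(g',d)$ together with stabilization by $(1,1)$.

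Conceptually, this should be a consequence of the closed case. Fixing once and for all a factorization $g_0=\prod_{k=1}^m[p_k,q_k]$ and prepending the same pairs $(p_k,q_k)$ to both handle tuples produces closed ($d=0$) generating systems $H,H'\in HS(G;g'+m,0)$ with trivial product of commutators, whose $\e$-invariants lie in $H_2(G,\ZZ)=(R\cap[F,F])/[F,R]$. By the displayed congruence these invariants agree, so Livingston's theorem (as recalled in the introduction via Dunfield--Thurston or Zimmermann) makes $H$ and $H'$ stably equivalent. This explains why the class in $R/[F,R]$ ought to be a complete stable invariant; however it does not directly finish the argument, since transferring the closed equivalence back into the ramified setting (un-capping, while keeping the $c_i$ untouched and not letting the closed moves mix the auxiliary handles $(p_k,q_k)$ with the genuine ones) is not automatic.

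The route I would actually carry out is therefore a direct relative adaptation of Zimmermann's algebraic argument, with the $c_i$ frozen throughout. I would first list the elementary handle moves available inside $Map(g',d)$ — Dehn twists $(a_j,b_j)\mapsto(a_j,b_ja_j^{\pm1})$ and their mirrors, handle exchanges, and handle slides conjugating one pair by components of another — together with stabilization, and check that each one fixes $ev(\hat v)$ modulo $[F,R]$ while leaving $c_1,\dots,c_d$ unchanged. The essential, and hardest, step is the converse normal-form statement: after adjoining enough trivial handles as scratch space, these moves must realize multiplication of the lift by an arbitrary generator $[\widehat f,\widehat r]$ of $[F,R]$, where the generation hypothesis $G=\langle a_j,b_j\rangle$ is exactly what guarantees that every $\widehat f$ is available as a word in the handle components. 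Granting this, both $v$ and $w$ can be driven to one and the same normal form, depending only on their common class in $R/[F,R]$, which is the asserted stable equivalence. I expect the main obstacle to be precisely this transvection step — verifying that the correction by an arbitrary element of $[F,R]$ can be produced by handle twists and stabilization while the branch components $c_1,\dots,c_d$ ride along inertly — which is the technical core of Zimmermann's method carried over to the bordered situation.
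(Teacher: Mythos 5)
Your proposal matches the paper's proof: the paper likewise cancels the common prefix $\prod_{i=1}^d\widehat{v_i}$ to reduce to the congruence $\prod_j[\widehat{w_{d+2j-1}},\widehat{w_{d+2j}}]\equiv\prod_j[\widehat{v_{d+2j-1}},\widehat{v_{d+2j}}]\pmod{[F,R]}$, and then invokes Livingston's theorem in Zimmermann's algebraic form, observing that the handle moves 2.1--2.5 of Zimmermann extend to mapping classes in $Map(g',d)$ acting as the identity on the first $d$ components. The ``relative adaptation with the $c_i$ frozen'' that you identify as the technical core is exactly how the paper closes the argument, so your route is essentially the same.
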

\begin{proof}

We have
$$
 ev(\hat{w})=\prod_{i=1}^d \widehat{v_i}\prod_{j=1}^{g'}[\widehat{w_{d+2j-1}},\widehat{w_{d+2j}}] \equiv ev(\hat{v})=
 \prod_{i=1}^d \widehat{v_i}\prod_{j=1}^{g'}[\widehat{v_{d+2j-1}},\widehat{v_{d+2j}}]
 \quad  (\!\!\!\!\! \mod [F,R])
$$
 hence 
\begin{equation}\label{d0sc}
\prod_{j=1}^{g'}[\widehat{w_{d+2j-1}},\widehat{w_{d+2j}}] \equiv \prod_{j=1}^{g'}[\widehat{v_{d+2j-1}},\widehat{v_{d+2j}}] \quad (\!\!\!\!\! \mod [F,R]) \, .
\end{equation}
This means that the right hand side of \eqref{d0sc} differs from the left hand side by a product of commutators in $[F,R]$.
Now, by Livingston's theorem \cite{Livingston} (cf. the algebraic proof in \cite{Zimmer}), it follows that we can realize these commutators
 by adding handles with trivial monodromies and acting with the mapping class group $Map_{g'}$.  
The mapping classes that are used in this procedure can be seen to be the restriction of mapping classes in $Map(g',d)$ 
that act as the identity on the first $d$ components  of the Hurwitz systems (see \cite{Zimmer}, 2.1--2.5). 
The proposition now follows from the similar result in the 
unramified case.
\end{proof}

We prove Theorem \ref{main} by showing that $v$ and $w$ are stably equivalent to Hurwitz generating systems
 for which the hypotheses of Proposition \ref{sc}
are satisfied. This is achieved through two reduction steps: first we prove that, after stabilization, we can  assume that $v_i=w_i$,
$i=1, \dots , d$, 
and $G=\langle v_{d+1}, \dots , v_{d+2g'}\rangle = 
\langle w_{d+1}, \dots , w_{d+2g'}\rangle$; then we stabilize further to obtain $N=0$ in \eqref{modfr}. 

%%%%%%%%%%%%%%%%%%%%%%%%%%%%%%%%%%%
\subsection*{1st step: reduction to the case $v_i=w_i$, $i=1, \dots , d$, $G=\langle v_{d+1}, \dots , v_{d+2g'}\rangle = 
\langle w_{d+1}, \dots , w_{d+2g'}\rangle$}

\begin{prop}\label{1st step}
Let 
$$
v=(c_1, \dots , c_d; a_1, b_1, \dots , a_{g'}, b_{g'})\in HS(G;g',d)
$$
be a Hurwitz generating system and let $g_1, \dots , g_d \in G$. Set $c_i'=g_ic_ig_i^{-1}$.  Then,  there exists $\varphi \in Map(g'+d,d)$ such that 
\begin{equation}\label{v2tilde}
\varphi \cdot v^d \quad = \quad (c_1', \dots , c_d'\, ; \, \lambda_1, \mu_1, \dots , \lambda_d, \mu_d,  a_{1},, b_1, \dots , a_{g'},  {b}_{g'})\, .
\end{equation}
Here $v^d$ is obtained from $v$ by adding $d$ handles with trivial monodromies.
 Precise formulas for the $\lambda$'s and $\mu$'s are given below.
\end{prop}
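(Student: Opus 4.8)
The plan is to realize the passage from $v^d$ to the asserted normal form as an explicit composition of mapping class group elements, one handle being ``spent'' per conjugation $c_i \mapsto g_i c_i g_i^{-1}$. First I would reduce to the case $d=1$ and a single conjugating element $g=g_1$: since the braid moves on the first block and the handle-manipulating mapping classes act independently on disjoint pieces of the geometric basis, conjugating each $c_i$ separately and then concatenating the resulting $\varphi_i$ will produce the general $\varphi\in Map(g'+d,d)$. So the heart of the matter is: given one ramification generator $c$ and one trivial handle $(a,b)=(1,1)$ adjacent to it, find $\varphi\in Map(g'+1,1)$ with
$$
\varphi\cdot(c\,;\,1,1,\dots)=(gcg^{-1}\,;\,\lambda,\mu,\dots)\,.
$$

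Next I would write down the candidate half-twist / Dehn-twist word explicitly. The key geometric picture is that a trivial handle sitting next to the branch loop $\ga_1$ can be used to ``carry'' the conjugating element $g$ across: a suitable mapping class slides the foot of the handle around a loop whose monodromy is $g$, which has the effect of replacing $c$ by $gcg^{-1}$ while depositing $g$ (or $g^{-1}$) into the handle generators $\lambda,\mu$. Concretely I expect the new handle pair to be something like $\lambda=g$, $\mu = c^{\,?}$ or a short product involving $g$ and $c$; these are the ``precise formulas'' the statement promises and which I would record by tracking the action on the generators $\ga_1,\a_1,\b_1$ of $\pi_1(C'\setminus\sB)$ under the chosen mapping class. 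The verification that $\varphi$ is a genuine element of $Map(g'+1,1)$ — i.e. that it fixes the product relation $\prod c_i\prod[a_j,b_j]=1$ and permutes the branch points trivially — is a direct check using the Hurwitz/braid relations.

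The main obstacle I anticipate is purely bookkeeping rather than conceptual: one must choose the mapping classes so that conjugating $c_i$ by $g_i$ uses up exactly the $i$-th added handle and leaves the original handles $(a_1,b_1,\dots,a_{g'},b_{g'})$ and the other branch generators untouched, as displayed on the right-hand side of \eqref{v2tilde}. Getting all the conjugation factors to land consistently (including the correct signs/inverses and the order in which the $\lambda_i,\mu_i$ appear) requires a careful, ordered application of the generators of $Map(g'+d,d)$ and an honest computation of their action on the geometric basis. I would therefore organize the argument as: (1) fix the standard generators of the mapping class group and their action on $\Pi_{g',d}$; (2) exhibit the single-handle move realizing one conjugation, with explicit $\lambda,\mu$; (3) compose $d$ such moves on disjoint handles; (4) check the product relation is preserved throughout. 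Step (2) is where the substantive verification lives, and the final displayed formulas for the $\lambda$'s and $\mu$'s are exactly its output.
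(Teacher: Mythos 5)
Your strategy is the paper's: spend one added trivial handle per branch generator, and realize each conjugation $c_i\mapsto g_ic_ig_i^{-1}$ by an explicit mapping class that simultaneously deposits data into that handle (the paper's formulas come out as $\lambda_i=g_i$, $\mu_i=c_i^{-1}g_i$, matching your guess of ``$g$ together with a short product of $g$ and $c$''). However, the one step you defer --- your step (2), the single-handle move --- is the entire content of the proposition, and as described it is not available off the shelf. The move the paper uses (Proposition 6.2(i) of \cite{CLP12}, recorded as \eqref{CLP}) conjugates the \emph{last} branch generator $c_d$ by the specific word $c_d\lambda\mu\lambda^{-1}$ in $c_d$ and the adjacent handle entries; it does not conjugate by an arbitrary prescribed $g$. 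To get conjugation by $g_i$ one must first \emph{load} the trivial handle with $h_i=c_i^{-1}g_i$, so that $c_ih_ih_ih_i^{-1}=g_i$, and the ability to replace a trivial handle $(1,1)$ by $(x,1)$ for an \emph{arbitrary} $x\in G$ is itself a nontrivial lemma (Lemma \ref{zimmer+}): it needs Zimmermann's moves for $x$ in the subgroup generated by the handle entries, plus a further application of \eqref{CLP} and the braid group to reach all of $G$, using that $v$ is a generating system. Your picture of ``sliding the foot of the handle around a loop of monodromy $g$'' is the right heuristic, but without the loading step there is no such loop to slide along.

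A second, smaller point: the claim that the $d$ moves act ``independently on disjoint pieces'' is not literally correct in the standard geometric basis. The key move only couples the $d$-th branch loop to the first handle, so each round requires braiding the target $c_i$ into the $d$-th slot and back; this conjugates the remaining branch generators (the paper's $\overline{c_j}$ in step 6), and the conjugating elements used in later rounds must be adjusted accordingly. This is exactly the ``careful, ordered application'' you anticipate, so it is bookkeeping rather than an obstruction, but it does mean the composition is genuinely sequential rather than a product of commuting moves on disjoint handles.
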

 We use the following 
\begin{lemma}\label{zimmer+}
For any
$$
(c_1, \dots , c_d \, ; \, a_1, b_1, \dots , a_{g'}, b_{g'}) \in HS(G;g',d)
$$
and for any $x \in G$, we have:
$$
(c_1, \dots , c_d \, ; \, 1,1, a_1, b_1, \dots , a_{g'}, b_{g'}) \approx  (c_1, \dots , c_d \, ; \, x,1, a_1, b_1, \dots , a_{g'}, b_{g'}) \, .
$$
\end{lemma}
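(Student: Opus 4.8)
The plan is to produce an explicit element of $Map(g'+1,d)$ carrying the first vector to the second, so that the equivalence $\approx$ holds already at the level of mapping-class-group orbits (even before passing to $\Aut(G)$-orbits). First I would record that both vectors are genuine Hurwitz generating systems: the local monodromies $c_1,\dots,c_d$ and the handles $(a_j,b_j)$ are untouched, so conditions (i) and (ii) of Definition \ref{factorisations} persist (the entries already generate $G$, and adjoining $x$ or $1$ cannot destroy generation); condition (iii) is preserved because $[1,1]=[x,1]=1$, so the evaluation $ev$ is unchanged. Thus the content of the lemma is purely that the two vectors lie in the same $Map(g'+1,d)$-orbit.

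The transformation to realize is the \emph{transvection} sending the first generator $A$ of the new handle from $1$ to $x$, fixing its partner $B=1$ and fixing every other entry. Since $x\in G$ and the components of $v$ generate $G$, I would write $x$ as a word $x=s_1^{\pm1}\cdots s_k^{\pm1}$ in the entries $c_1,\dots,c_d,a_1,b_1,\dots,a_{g'},b_{g'}$, and insert these letters onto the $A$-curve one at a time. Each insertion $A\mapsto A\,s^{\pm1}$ is realized by a Dehn twist along a simple closed curve obtained by band-summing the $A$-curve of the empty handle with the standard curve carrying the monodromy $s$ (a handle curve $a_j$ or $b_j$, or, for a branch generator $c_i$, a point-push / half-twist near $y_i$, available because we work in $Map(g'+1,d)$ with $d$ marked points). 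Because $B$ is trivial throughout, no commutator correction appears and the second entry stays $1$; composing the $k$ moves gives $A\mapsto 1\cdot x=x$. These are exactly the elementary mapping classes tabulated in \cite[2.1--2.5]{Zimmer}.

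The main point to check --- and the step I expect to be the real obstacle --- is that each elementary move genuinely fixes all remaining entries, i.e. that the band-summed curve can be chosen to meet the $A$-curve of the new handle exactly once and to be disjoint from the $B$-curve and from loops representing $c_1,\dots,c_d$ and the other handle generators. This is where the presence of the extra (first) handle is used: its two free curves provide the room to route each band so that the only standard generator it crosses is $A$. Organizing these curves compatibly for all letters of the chosen word for $x$, and verifying that the resulting product of Dehn twists and point-pushes acts on $\pi_1(C'\setminus\sB)$ as the single transvection $A\mapsto Ax$ with everything else fixed, is the bookkeeping I would import from Zimmermann's explicit formulas; that algebraic computation is precisely what makes the argument rigorous without drawing the surface.
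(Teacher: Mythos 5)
Your overall strategy --- write $x$ as a word in the entries of $v$ and realize each letter insertion by an elementary mapping class acting on the new handle, exploiting that the partner entry $B$ is trivial --- is the same as the paper's. The gap is at the step you yourself flag as ``the real obstacle'': the insertion of a branch generator $c_i$. You propose to import the verification from Zimmermann's explicit formulas, but \cite{Zimmer} (2.1--2.5 and Lemma 2.6) only treats the closed-surface generators; it yields the lemma only for $x\in\langle a_1,\dots,b_{g'}\rangle$ and says nothing about multiplying the new handle entry by a $c_i$. That is exactly the piece the paper has to supply separately, by first using braid moves to reduce to $i=d$ and then invoking the explicit transformation \eqref{CLP} from Proposition 6.2 of \cite{CLP12}.

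Moreover, your geometric justification for that step is not quite right as stated: there is no mapping class sending the new handle generator $A$ to $\gamma_d A$ while fixing $\gamma_d$ and all the other generators of $\pi_1(C'\setminus\sB,y_0)$ on the nose, since such a transvection would not preserve the surface relation $\prod\gamma_i\prod[\alpha_j,\beta_j]=1$; the point-push realizing $A\mapsto \gamma_d A$ necessarily conjugates $\gamma_d$ by a loop crossing $A$, so the band-summed curve cannot be chosen disjoint from $\gamma_d$. What saves the Hurwitz vector is not disjointness of curves but an evaluation: in \eqref{CLP} the conjugating element is $g=v_dv_{d+1}v_{d+2}v_{d+1}^{-1}$, which for the handle $(x,1)$ equals $c_d\,x\cdot 1\cdot x^{-1}=c_d$, so the entry $c_d$ is unchanged in $G$ even though $\gamma_d$ is moved in $\pi_1$. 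Without this computation (or an equivalent explicit formula for the relevant mapping class) the argument is incomplete; with it, your proof becomes the paper's.
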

\begin{proof}

A direct computation shows that, for any  automorphism $\varphi$ of the form 2.1--2.5 in \cite{Zimmer}, the map
\begin{align*}
\overline{\varphi} & \colon \langle c_1,\dots , c_d;a_1,\dots , b_{g'}| \prod_1^dc_i\prod_1^{g'}[a_j,b_j]=1\rangle \to  
\langle c_1,\dots , c_d;a_1,\dots , b_{g'}| \prod_1^dc_i\prod_1^{g'}[a_j,b_j]=1\rangle  \\
& c_i \mapsto c_i ,  \quad  a_j \mapsto \varphi (a_j), \quad  b_j \mapsto \varphi (b_j)
\end{align*}
defines an automorphism, so $\overline{\varphi}\in Map\left( C',\mathcal{B};\{ y_0\} \right)$. Here  $\sB$ is  the branch locus of $ C \ra C'$ and 
$Map\left( C',\mathcal{B};\{ y_0\} \right)$ is the group of isotopy classes of diffeomorphisms
$f\colon C' \to C'$ such that $f$ preserves the orientation, $f(\mathcal{B})=\mathcal{B}$, and $f(y_0)=y_0$. 

By \cite{Zimmer}  Lemma 2.6 the claim is true for any $x \in \langle a_1, \dots , b_{g'}\rangle$, therefore it remains to prove that
$$
(c_1, \dots , c_d \, ; \, x,1, a_1, b_1, \dots , a_{g'}, b_{g'}) \approx (c_1, \dots , c_d \, ; \, c_i x,1, a_1, b_1, \dots , a_{g'}, b_{g'}) ,
$$
for any $i=1,\dots , d$. Using the braid group, it is enough to prove the above equivalence when $i=d$:
the result follows 
as a direct consequence of   Proposition 6.2 of \cite{CLP12}, (i),  with $\ell =1$ (see figure 2),  
yielding the following useful transformation which leaves all the components of the Hurwitz vector unchanged except for

\begin{multline}\label{CLP}
a_1 \mapsto  c_d a_1 ,  \  \  c_d  \mapsto  (c_d a_1 b_1 a_1^{-1}) c_d   (c_d a_1 b_1 a_1^{-1})^{-1} \\
\Leftrightarrow
v_{d+1}  \mapsto  v_d v_{d+1}  ,  \  \  v_d  \mapsto  
g v_d g^{-1} \\
g : = (v_d v_{d+1} v_{d+2} v_{d+1}^{-1})  .
\end{multline}
\end{proof}

\begin{figure}
	\centering
  \includegraphics[width=0.8\textwidth]{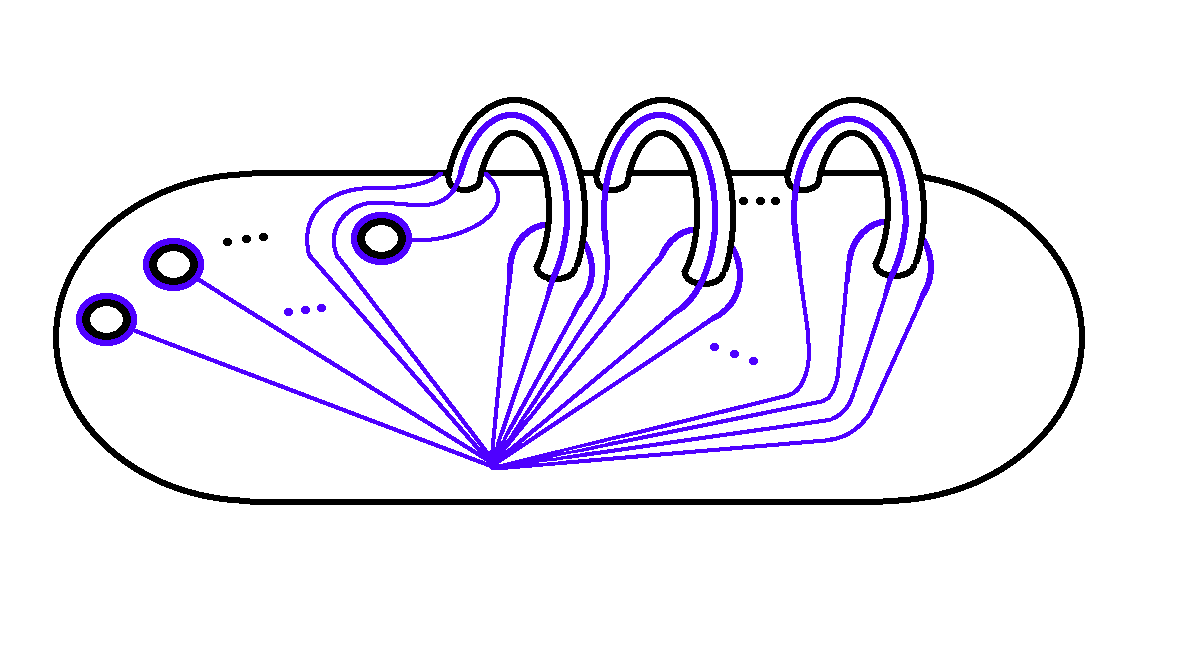}
  \caption{}
	\label{fig2}
\end{figure}

\begin{proof} (of Proposition \ref{1st step})
$\varphi$ is the composition of the mapping classes corresponding to the following steps 2 -- 7.
Set $h_i=c_i^{-1}g_i$ and perform the following operations. 
\begin{itemize}
\item[1.]
Add a trivial handle to $v$ obtaining: $ (c_1,\dots , c_d \, ; \,  1,1, a_1,b_1,\dots , a_{g'}, b_{g'})$.
\item[2.]
Bring $c_1$ to the $d$-th position using the braid group: 
$$
(c_1,\dots , c_d \, ; \,  1,1, a_1,b_1,\dots , a_{g'}, b_{g'}) \approx (c_1c_2c_1^{-1},\dots , c_1 \, ; \,  1,1, a_1,b_1,\dots , a_{g'}, b_{g'})\, .
$$
\item[3.]
Apply Lemma \ref{zimmer+} with $x=h_1$:
$$
(c_1c_2c_1^{-1},\dots , c_1 \, ; \,  1,1, a_1,b_1,\dots , a_{g'}, b_{g'}) \approx (c_1c_2c_1^{-1},\dots , c_1 \, ; \,  h_1,1, a_1,b_1,\dots , a_{g'}, b_{g'})\, .
$$
\item[4.]
Change $h_1,1$ to $h_1,h_1$ according to the automorphism
of \cite[2.1.b)]{Zimmer}.
\item[5.]
Apply again Proposition 6.2 of \cite{CLP12} (i) with $\ell =1$ 
(see \eqref{CLP} and  figure 2 again):
\begin{align*}
& (c_1c_2c_1^{-1},\dots , c_1 \, ; \,  h_1,h_1, a_1,b_1,
\dots , a_{g'}, b_{g'}) \\
\approx &
(c_1c_2c_1^{-1},\dots , 
(c_1h_1h_1h_1^{-1})c_1 (c_1h_1h_1h_1^{-1})^{-1} \, ; \,  
c_1h_1, h_1 , a_1,b_1,\dots , a_{g'}, b_{g'}) \\
=& (c_1c_2c_1^{-1},\dots , g_ic_1g_i\inv \, ; \,  c_1h_1, h_1 , a_1,b_1,\dots , a_{g'}, b_{g'}) \, \\
=& (c_1c_2c_1^{-1},\dots , c_1' \, ; \,  c_1h_1, h_1 , a_1,b_1,\dots , a_{g'}, b_{g'}) \, .
\end{align*}
\item[6.]
Use the braid group to move the last monodromy to the first position:
\begin{eqnarray*}
&& 
(c_1c_2c_1^{-1},\dots , c_1' \, ; \,  c_1h_1, h_1 , a_1,b_1,\dots , a_{g'}, b_{g'})\\
& \approx & 
(c_1', \overline{c_2}, \dots , \overline{c_d} \, ; \,  c_1h_1, h_1 ,a_1,b_1,\dots , a_{g'}, b_{g'})
\end{eqnarray*}
where  $\overline{c_i}$ is a conjugate of $c_i$, $\forall i$.
\item[7.]
Repeat the steps above for $\overline{c_2}$ and so on.
\end{itemize}
\end{proof}

\begin{rem}
The condition $G=\langle v_{d+1}, \dots , v_{d+2g'}\rangle = 
\langle w_{d+1}, \dots , w_{d+2g'}\rangle$ in Prop. \ref{sc} can be achieved by using Lemma \ref{zimmer+}.
\end{rem}
%%%%%%%%%%%%%%%%%%%%%%%%
\subsection*{2nd step: reduction to the case $N=0$}
Let $v$ and $w$ be Hurwitz generating systems as in the beginning of the section.
By the 1st step we  assume that $v_i=w_i$, $i=1, \dots , d$,
and that $G=\langle v_{d+1}, \dots , v_{d+2g'}\rangle = \langle w_{d+1}, \dots , w_{d+2g'}\rangle$.
By hypothesis we have:
\begin{equation}\label{eq2nd1}
ev(\hat{w}) \equiv ev(\hat{v})\prod_{\ell=1}^N \left( \widehat{x_\ell}\widehat{y_\ell}\widehat{z_\ell}^{-1}\widehat{y_\ell}^{-1} \right)^{\s_\ell}    (\!\!\!\!\! \mod [F,R]) \, ,
\end{equation}
where $x_\ell y_\ell z_\ell^{-1}y_\ell^{-1}=1$, $\s_\ell =\pm 1$, $x_\ell , z_\ell \in \Gamma$.

The main result of this subsection is the following
\begin{prop}\label{2nd step}
Let $v,w \in HS(G;g',d)$ be Hurwitz generating systems with $\nu(v)=\nu(w)$ and $\e(v)=\e(w)$.
Assume further  $v_i=w_i$, $i=1,\dots , d$, and  
$G=\langle v_{d+1}, \dots , v_{d+2g'}\rangle = \langle w_{d+1}, \dots , w_{d+2g'}\rangle$. 
Then there exist $h\in \NN$, $\varphi, \psi \in Map(g'+h,d)$ such that 
\begin{eqnarray*}
ev(\widehat{\psi\cdot w^h}) &\equiv & ev(\widehat{\varphi \cdot v^h}) \quad (\!\!\!\! \mod [F,R])\, ,   \mbox{and}\\
(\psi\cdot w^h)_i&=&(\varphi \cdot v^h)_i \, , \quad \forall i=1, \dots , d \, .
\end{eqnarray*}
\end{prop}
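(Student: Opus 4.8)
The plan is to reduce the integer $N$ in \eqref{eq2nd1} to $0$ by induction, absorbing one factor $\bigl(\widehat{x_\ell}\widehat{y_\ell}\widehat{z_\ell}^{-1}\widehat{y_\ell}^{-1}\bigr)^{\s_\ell}$ at a time into the genuine evaluation on the $v$-side, while leaving the first $d$ components untouched on both sides; the $w$-side will only be stabilised trivially, so that at the end $\psi\cdot w^h=w^h$ and all the work is carried by $\varphi$. Before starting I record a constraint coming from Lemma \ref{relhopf}: since $v_i=w_i$ for $i\le d$ and $\nu(v)=\nu(w)$, the evaluations $ev(\hat v)$ and $ev(\hat w)$ have the same image $\sum_{i\le d}\widehat{v_i}$ in $F^{ab}=H_1(BG^1)$. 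Hence, projecting \eqref{eq2nd1} to $F^{ab}$, the total discrepancy $\Delta:=\prod_\ell(\widehat{x_\ell}\widehat{y_\ell}\widehat{z_\ell}^{-1}\widehat{y_\ell}^{-1})^{\s_\ell}$ satisfies $\sum_\ell \s_\ell(\widehat{x_\ell}-\widehat{z_\ell})=0$ in $F^{ab}$, so that $\Delta$ in fact lies in $\frac{R\cap[F,F]}{[F,R]}\cong H_2(BG,\ZZ)$. Geometrically this says that the signed multiset of the ``initial'' local elements $x_\ell$ coincides with that of the ``final'' elements $z_\ell$; this is exactly the bookkeeping that will allow me to return all $d$ local monodromies to their original values at the end.

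For the inductive step I would realise a single factor as the effect of dragging a branch point around a loop, which is the geometric meaning of a cylinder relation $\widehat{a}\widehat{b}\widehat{c}^{-1}\widehat{b}^{-1}$ uncovered in Lemma \ref{bp1}. Concretely, fix one factor, say the one indexed by $\ell$, with $z_\ell=y_\ell^{-1}x_\ell y_\ell$ and $x_\ell,z_\ell\in\Ga$. After one stabilisation I build the element $y_\ell$ as a word in the handle generators of the new (initially trivial) handle, using Lemma \ref{zimmer+} and the transformation \eqref{CLP} exactly as in the first step and using the hypothesis $G=\langle v_{d+1},\dots,v_{d+2g'}\rangle$; since $x_\ell\in\Ga=\Ga_v$ is conjugate to some $c_i$, a sequence of braid moves together with \eqref{CLP} brings a local monodromy to the value $x_\ell$. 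I then push this branch point once around the loop representing $y_\ell$, which conjugates its local element from $x_\ell$ to $z_\ell$ and, by \eqref{bp*} and the proof of Lemma \ref{bp1}, multiplies $ev(\hat v)$ precisely by $(\widehat{x_\ell}\widehat{y_\ell}\widehat{z_\ell}^{-1}\widehat{y_\ell}^{-1})^{\s_\ell}$ modulo $[F,R]$ (the sign recording the direction of the push). Crucially, the conjugation of the local element is implemented à la \eqref{CLP}, i.e. absorbed into the change of a handle generator rather than undone by a reverse push, so the factor is genuinely created and not cancelled.

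After producing all $N$ factors one gets $ev(\widehat{\varphi\cdot v^h})\equiv ev(\hat w)\pmod{[F,R]}$ by \eqref{eq2nd1}. It then remains to restore the first $d$ components: the successive pushes have replaced the local monodromies by a reshuffled list of conjugates, but the $F^{ab}$-identity $\sum_\ell\s_\ell(\widehat{x_\ell}-\widehat{z_\ell})=0$ from the first paragraph guarantees that the signed multiset of changes cancels, so a further sequence of braid moves and \eqref{CLP}-type conjugations (again paid for by handle generators, whose effect on the evaluation is once more by cylinder relations that have already been accounted for) brings the first $d$ components back to $(v_1,\dots,v_d)=(w_1,\dots,w_d)$. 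Stabilising $w$ by the same number $h$ of trivial handles and taking $\psi$ to be this trivial stabilisation then yields $(\varphi\cdot v^h)_i=(\psi\cdot w^h)_i$ for $i\le d$ together with equal evaluations modulo $[F,R]$, which is the assertion of Proposition \ref{2nd step}; Proposition \ref{sc} then applies and completes the proof of Theorem \ref{main}.

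The main obstacle I anticipate is precisely this last bookkeeping: ensuring that the intermediate pushes, which necessarily move the branch monodromies, can be organised so that each contributes exactly one prescribed cylinder word to the evaluation and their composite restores the original $d$-tuple $(c_1,\dots,c_d)$ without reintroducing spurious relations. Controlling the difference between $\widehat{g^{-1}}$ and $\widehat{g}^{-1}$, and between $\widehat{gh}$ and $\widehat{g}\widehat{h}$, at the level of $F$ (they agree only modulo $R$, whereas here we work modulo the finer subgroup $[F,R]$) is the delicate point, and is where the explicit automorphisms 2.1--2.6 of \cite{Zimmer} and Proposition 6.2 of \cite{CLP12} must be combined carefully, exactly as in the first step.
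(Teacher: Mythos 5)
Your overall strategy coincides with the paper's --- stabilize, and use Lemma \ref{zimmer+} together with \eqref{CLP}-type moves to absorb the discrepancy $\prod_\ell(\widehat{x_\ell}\widehat{y_\ell}\widehat{z_\ell}^{-1}\widehat{y_\ell}^{-1})^{\s_\ell}$ into $ev(\widehat{\varphi\cdot v^h})$ --- and your observation that projecting \eqref{eq2nd1} to $F^{ab}$ forces the signed multiset of the $x_\ell$ to match that of the $z_\ell$ is exactly the combinatorial input the paper uses. But there is a genuine gap at the point you yourself flag as ``the main obstacle'': you propose to realize each cylinder word individually by dragging a branch point around a loop, which changes its local monodromy from $x_\ell$ to $z_\ell$, and you defer the restoration of the original $d$-tuple to a final cleanup pass. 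That cleanup is not free. By Lemma \ref{2nd step l3} (ii)--(iii), every braid move or \eqref{CLP}-conjugation that touches the first $d$ components multiplies $ev(\hat v)$ by a \emph{further} cylinder word modulo $[F,R]$, and nothing in your argument shows that these extra contributions cancel against anything; ``already accounted for'' is an assertion, not a proof. Since the entire difficulty of the second step is that $ev(\hat v)$ is controlled only modulo $R_\Gamma$ under the mapping class group, whereas the statement requires control modulo the finer subgroup $[F,R]$, this is precisely where the argument can fail.

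The paper closes this gap by doing the combinatorics \emph{before} the geometry. The abelianized equation furnishes a permutation $\tau$ with $c_\ell=a_{\tau(\ell)}$, and Lemma \ref{2ndstepl1} composes the cylinder words along each cycle of $\tau$ using the identity of Lemma \ref{=modfr} (iii),
$$
(\hat{x}\hat{y}\hat{z}^{-1}\hat{y}^{-1})(\hat{z}\widehat{y_1}\widehat{z_1}^{-1}\widehat{y_1}^{-1})\equiv \hat{x}\widehat{yy_1}\widehat{z_1}^{-1}\widehat{yy_1}^{-1}\quad(\!\!\!\!\mod [F,R])\,,
$$
so that each cycle collapses to a single commutator $[\widehat{\x},\widehat{\eta}]$ with $\x\in\Gamma$ and, crucially, $[\x,\eta]=1$ in $G$. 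Realizing such a commutator by one stabilization, Lemma \ref{zimmer+} and \eqref{CLP} then conjugates the local monodromy $c_d=\x$ by $\x\eta\x\eta^{-1}\x^{-1}=\x$, i.e.\ leaves it \emph{unchanged}, while multiplying the evaluation by exactly $[\widehat{\x},\widehat{\eta}]$. No final restoration pass is needed, and this prior reorganization into commutators of commuting elements is the idea missing from your write-up.
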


To prove Proposition \ref{2nd step}, we first rewrite $\prod_{\ell=1}^N \left( \widehat{x_\ell}\widehat{y_\ell}\widehat{z_\ell}^{-1}\widehat{y_\ell}^{-1} \right)^{\s_\ell} $
in \eqref{eq2nd1} as a product of commutators of the form $[\hat{\eta}, \hat{\x}]$ where $\x \in \Gamma$ and $[\eta,\xi]=1$.
To achieve this, we use the following identities.

\begin{lemma}\label{=modfr}
Let $x,y,z, y_1, z_1 \in G$ be such that 
$$
xyz^{-1}y^{-1}={z}{y_1}{z_1}^{-1}{y_1}^{-1} =1 \, .
$$
Then the  following congruences hold, where as usual $\hat{x}, \hat{y}, \hat{z},\widehat{y_1}, \widehat{z_1} \in F$
are the tautological lifts of $x,y,z, y_1, z_1$.
\medskip
\begin{itemize}
\item[(i)]
$ \widehat{x}\widehat{y}\widehat{z}^{-1}\widehat{y}^{-1}\equiv \widehat{y}^{-1}\widehat{x}\widehat{y}\widehat{z}^{-1} \quad (\!\!\!\!\mod [F,R])$.
\medskip
\item[(ii)]
$\left( \widehat{x}\widehat{y}\widehat{z}^{-1}\widehat{y}^{-1} \right)^{-1}=
\widehat{y}\widehat{z}\widehat{y}^{-1}\widehat{x}^{-1}\equiv
\widehat{z}\widehat{y}^{-1}\widehat{x}^{-1}\widehat{y}\quad (\!\!\!\!\mod [F,R])$.
\medskip
\item[(iii)]
$(\hat{x}\hat{y}\hat{z}^{-1}\hat{y}^{-1})(\hat{z}\widehat{y_1}\widehat{z_1}^{-1}\widehat{y_1}^{-1})\equiv \hat{x}\widehat{yy_1}\widehat{z_1}^{-1}\widehat{yy_1}^{-1}
\quad (\!\!\!\!\mod [F,R])$.
\medskip
\item[(iv)]
$\hat{x}\hat{y}^{\s}\hat{z}^{-1}\hat{y}^{-\s} \equiv \hat{x}\widehat{y^{\s}}\hat{z}^{-1}\widehat{y^{\s}}^{-1}
\quad (\!\!\!\!\mod [F,R])$, where $\s =\pm 1$.
\end{itemize}
\end{lemma}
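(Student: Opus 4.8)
The plan is to prove all four congruences by working inside the group $F/[F,R]$ and exploiting a single structural fact: the subgroup $R/[F,R]$ is central in $F/[F,R]$. Indeed, for every $f\in F$ and $r\in R$ one has $[f,r]=frf^{-1}r^{-1}\in[F,R]$ directly from the definition of $[F,R]$, so $fr\equiv rf\pmod{[F,R]}$; this is nothing but the $\Ga=\emptyset$ case of \cite[Lemma 3.2]{CLP12}, since $R_\emptyset=[F,R]$. The second, purely bookkeeping, ingredient is that the relevant words are \emph{relations}: under the hypothesis $xyz^{-1}y^{-1}=1$ in $G$ the word $\hat{x}\hat{y}\hat{z}^{-1}\hat{y}^{-1}$ maps to $1$ in $G=F/R$, hence lies in $R$; likewise $\hat{z}\widehat{y_1}\widehat{z_1}^{-1}\widehat{y_1}^{-1}\in R$ under the second hypothesis, and the ``factor-set'' words $\hat{y}\widehat{y_1}\widehat{yy_1}^{-1}$ and $\hat{y}\widehat{y^{-1}}$ always lie in $R$ because they evaluate to $1$ in $G$.

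Granting this, parts (i) and (ii) are immediate. Setting $r:=\hat{x}\hat{y}\hat{z}^{-1}\hat{y}^{-1}\in R$, the right-hand side of (i) is precisely the conjugate $\hat{y}^{-1}r\hat{y}$, which is $\equiv r\pmod{[F,R]}$ by centrality; (ii) is the identical move applied to $r^{-1}=\hat{y}\hat{z}\hat{y}^{-1}\hat{x}^{-1}$, whose conjugate $\hat{y}^{-1}r^{-1}\hat{y}=\hat{z}\hat{y}^{-1}\hat{x}^{-1}\hat{y}$ is again $\equiv r^{-1}$. Part (iv) is the same mechanism: for $\s=1$ it is a tautology, while for $\s=-1$ I would write $\widehat{y^{-1}}=\hat{y}^{-1}t$ with $t:=\hat{y}\widehat{y^{-1}}\in R$, substitute into the right-hand side, and use centrality to gather the two occurrences of $t^{\pm1}$ so that they cancel, leaving exactly $\hat{x}\hat{y}^{-1}\hat{z}^{-1}\hat{y}$.

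The only step carrying real content, and the one I expect to be the main (if modest) obstacle, is (iii), because it requires merging two conjugations into a single one while controlling the discrepancy between $\hat{y}\widehat{y_1}$ and the single generator $\widehat{yy_1}$. Here I would introduce $t:=\hat{y}\widehat{y_1}\widehat{yy_1}^{-1}\in R$ and $r_2:=\hat{z}\widehat{y_1}\widehat{z_1}^{-1}\widehat{y_1}^{-1}\in R$, rewrite $\widehat{yy_1}=t^{-1}\hat{y}\widehat{y_1}$, and plug this into the right-hand side of (iii). Collecting the two central factors $t^{\pm1}$ (which then cancel) reduces the right-hand side to $\hat{x}\hat{y}\bigl(\widehat{y_1}\widehat{z_1}^{-1}\widehat{y_1}^{-1}\bigr)\hat{y}^{-1}=\hat{x}\hat{y}\hat{z}^{-1}r_2\hat{y}^{-1}$, using $\widehat{y_1}\widehat{z_1}^{-1}\widehat{y_1}^{-1}=\hat{z}^{-1}r_2$. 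A last application of centrality to slide $r_2$ past $\hat{y}^{-1}$ turns this into $\bigl(\hat{x}\hat{y}\hat{z}^{-1}\hat{y}^{-1}\bigr)r_2$, which is precisely the left-hand side of (iii). In this way all four congruences become consequences of the centrality of $R/[F,R]$ in $F/[F,R]$ together with the elementary identification of the words in play as elements of $R$.
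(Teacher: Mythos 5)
Your proof is correct and follows essentially the same route as the paper's: both arguments rest entirely on the centrality of $R/[F,R]$ in $F/[F,R]$ together with the observation that the relevant words ($\hat{x}\hat{y}\hat{z}^{-1}\hat{y}^{-1}$, $\hat{z}\widehat{y_1}\widehat{z_1}^{-1}\widehat{y_1}^{-1}$, $\hat{y}\widehat{y^{-1}}$, $\hat{y}\widehat{y_1}\widehat{yy_1}^{-1}$) lie in $R$. The only cosmetic difference is in (iii) and (iv), where the paper exhibits the discrepancy between the two sides as an explicit conjugate of a commutator of an element of $F$ with an element of $R$, while you instead insert central correction factors and slide them until they cancel; the two computations are interchangeable.
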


\begin{proof}
(i) and (ii) follows from the fact that, if $ r \in R$, then $$r \equiv \widehat{y}^{-1}  r \widehat{y}  \ (\!\!\!\mod [F,R]).$$

For (iv) we will use that $[F,R]$ is a normal subgroup of $F$.

\medskip

(iii) Using (i) we have:
$$
(\hat{x}\hat{y}\hat{z}^{-1}\hat{y}^{-1})(\hat{z}\widehat{y_1}\widehat{z_1}^{-1}\widehat{y_1}^{-1})\equiv 
(\hat{y}^{-1}\hat{x}\hat{y}\hat{z}^{-1})(\hat{z}\widehat{y_1}\widehat{z_1}^{-1}\widehat{y_1}^{-1}) = 
\hat{y}^{-1}\hat{x}\hat{y}\widehat{y_1}\widehat{z_1}^{-1}\widehat{y_1}^{-1} \quad (\!\!\!\! \mod [F,R]) \, .
$$
Moreover the above element is in $R$, since it is congruent to a product of two elements in $R$ modulo $[F,R]$. Hence we have by the usual token:
$$
\hat{y}^{-1}\hat{x}\hat{y}\widehat{y_1}\widehat{z_1}^{-1}\widehat{y_1}^{-1} \equiv  \hat{x}\hat{y}\widehat{y_1}\widehat{z_1}^{-1}\widehat{y_1}^{-1} \hat{y}^{-1} 
 \quad (\!\!\!\! \mod [F,R]) \, ,
$$
in fact the right hand side is just the conjugate of the left hand side by $\widehat{y}$.
Finally
$$
(\hat{x}\hat{y}\widehat{y_1}\widehat{z_1}^{-1}\widehat{y_1}^{-1} \hat{y}^{-1})(\hat{x}\widehat{yy_1}\widehat{z_1}^{-1}\widehat{yy_1}^{-1})^{-1}=
(\hat{x}\hat{y}\widehat{y_1})[\widehat{z_1}^{-1}, \widehat{y_1}^{-1} \hat{y}^{-1}\widehat{yy_1}](\hat{x}\hat{y}\widehat{y_1})^{-1} \in [F,R] \, .
$$

\medskip

(iv) We have: $(\hat{x}\hat{y}^{\s}\hat{z}^{-1}\hat{y}^{-\s})( \hat{x}\widehat{y^{\s}}\hat{z}^{-1}\widehat{y^{\s}}^{-1})^{-1}= (\hat{x}\hat{y}^\s)[\hat{z}^{-1}, \hat{y}^{-\s}\widehat{y^\s}]
(\hat{x}\hat{y}^\s)^{-1}\in [F,R]$.

\end{proof}

\begin{lemma}\label{2ndstepl1}
Let $v, w$ be as in Proposition \ref{2nd step}. 
Then there exist  $M\in \NN$ and, for $m=1, \dots , M$, elements $\x_m \in \Gamma$ and $\eta_m \in G$ with $[\x_m, \eta_m]=1$,  such that 
$$
ev(\hat{w}) \equiv ev(\hat{v})\prod_{m=1}^M [ \widehat{\x_m}, \widehat{\eta_m}] \quad    (\!\!\!\!\!\! \mod [F,R]) \, .
$$
\end{lemma}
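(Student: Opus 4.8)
The plan is to start from the congruence \eqref{eq2nd1}, which already expresses $ev(\hat w)$ as $ev(\hat v)$ times a product of lifted relations $\widehat{x_\ell}\widehat{y_\ell}\widehat{z_\ell}^{-1}\widehat{y_\ell}^{-1}$ (each a relation since $x_\ell=y_\ell z_\ell y_\ell^{-1}$, with $x_\ell,z_\ell\in\Gamma$), and to massage this product into a product of commutators of the special shape $[\widehat{\x_m},\widehat{\eta_m}]$ with $\x_m\in\Gamma$ and $[\x_m,\eta_m]=1$. The whole manipulation takes place in $R/[F,R]$. The crucial structural fact I will exploit throughout is that $R/[F,R]$ is \emph{central} in $F/[F,R]$: indeed every generator $f r f^{-1} r^{-1}$ of $[F,R]$ forces $f$ and $r$ to commute modulo $[F,R]$. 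In particular $R/[F,R]$ is abelian, so the factors — all of which lie in $R$ — may be reordered at will modulo $[F,R]$, and any factor may be replaced by a congruent one without affecting the rest of the product.

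First I would remove the signs $\sigma_\ell=\pm 1$. For each $\ell$ with $\sigma_\ell=-1$, Lemma \ref{=modfr}(ii) rewrites $(\widehat{x_\ell}\widehat{y_\ell}\widehat{z_\ell}^{-1}\widehat{y_\ell}^{-1})^{-1}\equiv \widehat{z_\ell}\widehat{y_\ell}^{-1}\widehat{x_\ell}^{-1}\widehat{y_\ell}$, and then Lemma \ref{=modfr}(iv) turns this into the positive special relation $\widehat{z_\ell}\,\widehat{y_\ell^{-1}}\,\widehat{x_\ell}^{-1}\,\widehat{y_\ell^{-1}}^{-1}$, whose distinguished outer entries $z_\ell,x_\ell$ again lie in $\Gamma$ (the roles of $x$ and $z$ being interchanged). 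After this step I may assume, relabelling, that $ev(\hat w)\equiv ev(\hat v)\prod_{\ell=1}^{N}\widehat{x_\ell}\widehat{y_\ell}\widehat{z_\ell}^{-1}\widehat{y_\ell}^{-1}\pmod{[F,R]}$ with all factors positive and $x_\ell=y_\ell z_\ell y_\ell^{-1}$, $x_\ell,z_\ell\in\Gamma$.

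Next comes the combinatorial heart. Passing to $F^{ab}$ kills every commutator, so the image of $ev(\hat v)$ there is $\sum_{i\le d}[\widehat{v_i}]$; since $v_i=w_i$ for $i\le d$, the images of $ev(\hat v)$ and $ev(\hat w)$ coincide, whence $ev(\hat w)\,ev(\hat v)^{-1}\in[F,F]$. As the image of a positive factor $\widehat{x_\ell}\widehat{y_\ell}\widehat{z_\ell}^{-1}\widehat{y_\ell}^{-1}$ in $F^{ab}$ is $[\widehat{x_\ell}]-[\widehat{z_\ell}]$, I conclude $\sum_\ell([\widehat{x_\ell}]-[\widehat{z_\ell}])=0$ in the free abelian group on $G$, i.e. the multiset $\{x_\ell\}$ equals the multiset $\{z_\ell\}$. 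Fix a permutation $\pi$ with $z_\ell=x_{\pi(\ell)}$. Reordering the factors (legitimate by centrality) so that each cycle of $\pi$ becomes a consecutive block, I would merge each block by repeated application of Lemma \ref{=modfr}(iii): whenever the inner entry $z$ of one factor equals the outer entry $x$ of the next, (iii) fuses them, accumulating the $y$-entries into a product. Running once around a cycle $(\ell_1,\ldots,\ell_k)$ with $z_{\ell_j}=x_{\ell_{j+1}}$ returns a single factor $\widehat{x_{\ell_1}}\widehat{Y}\widehat{x_{\ell_1}}^{-1}\widehat{Y}^{-1}=[\widehat{x_{\ell_1}},\widehat{Y}]$ with $Y=y_{\ell_1}\cdots y_{\ell_k}$, because the cycle closes up ($z_{\ell_k}=x_{\ell_1}$). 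Telescoping the chain of conjugations $x_{\ell_j}=y_{\ell_j}x_{\ell_{j+1}}y_{\ell_j}^{-1}$ shows $Y x_{\ell_1}Y^{-1}=x_{\ell_1}$, i.e. $[x_{\ell_1},Y]=1$, while $x_{\ell_1}\in\Gamma$. Setting $\x_m:=x_{\ell_1}$ and $\eta_m:=Y$ for the $m$-th cycle then yields the claimed expression, with $M$ the number of cycles of $\pi$.

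The step I expect to require the most care is this last cycle-merging: one must check that each intermediate fusion still produces a genuine lifted relation (so that (iii) keeps applying), which follows since the partial products stay inside $R$, and one must track the accumulating conjugations to certify $[\x_m,\eta_m]=1$. The sign-elimination and the $F^{ab}$ weight computation are routine by comparison, the only subtlety there being the bookkeeping that Lemma \ref{=modfr}(iv) genuinely converts the swapped expression into a factor whose distinguished entries remain in $\Gamma$.
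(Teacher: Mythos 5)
Your proposal is correct and follows essentially the same route as the paper's proof: eliminate the signs via Lemma \ref{=modfr}(ii) and (iv), pass to $F^{ab}$ using $v_i=w_i$ to extract a permutation matching the $x$'s with the $z$'s, reorder the (central, hence commuting) factors along the cycles of that permutation, and fuse each cycle into a single commutator $[\widehat{\x_m},\widehat{\eta_m}]$ by iterating Lemma \ref{=modfr}(iii). The only cosmetic differences are that the paper treats a single $N$-cycle first and handles the general case by induction, and deduces $[\x_m,\eta_m]=1$ from the product lying in $R$ rather than by your (equally valid) telescoping of the conjugation chain.
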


\begin{proof}
Using Lemma \ref{=modfr} (ii) and (iv), rewrite  \eqref{eq2nd1} as
\begin{equation}\label{eq2nd2}
ev(\hat{w}) \equiv ev(\hat{v})\prod_{\ell=1}^N \widehat{a_\ell}\widehat{b_\ell}\widehat{c_\ell}^{-1}\widehat{b_\ell}^{-1}    (\!\!\!\!\! \mod [F,R]) \, ,
\end{equation}
where $a_\ell =x_\ell$, $b_\ell=y_\ell$, $c_{\ell}=z_\ell$, if $\s_\ell =1$, and 
$a_\ell =z_\ell$, $b_\ell=y_\ell^{-1}$, $c_{\ell}=x_\ell$, if $\s_\ell =-1$.

Consider the image of  \eqref{eq2nd2} in the abelianized group $F^{ab}$. Since $v_i=w_i$, $i=1, \dots , d$,  we get:
$$
\prod_{\ell =1}^N \widehat{a_\ell} \widehat{c_\ell}^{-1}=1   (\!\!\!\!\! \mod [F,F]) \, .
$$
Hence there exists a permutation $\tau \in \frak{S}_N$, such that $c_\ell =a_{\tau (\ell)}$,  for any $\ell=1, \dots , N$. 

Let us treat first the case where $\tau$ is a cycle of length $N$: then the set $\{ a_{\tau^k(1)}\, | \, k\in \NN \} =\{ a_1, \dots , a_N\}$ and,
since the product of the factors  $ (\!\!\!\!\! \mod [F,R])$ is independent of the order,
\begin{eqnarray*}
\prod_{\ell=1}^N \widehat{a_\ell}\widehat{b_\ell}\widehat{c_\ell}^{-1}\widehat{b_\ell}^{-1} \equiv 
(\widehat{a_1}\widehat{b_1}\widehat{c_1}^{-1}\widehat{b_1}^{-1})
\prod_{k=1}^{N-1} \left( \widehat{c_{\tau^{k-1}(1)}} \widehat{b_{\tau^k(1)}}\widehat{c_{\tau^k(1)}}^{-1} \widehat{b_{\tau^k(1)}}^{-1} \right)   (\!\!\!\!\! \mod [F,R])\, .
\end{eqnarray*}
Setting  $\x_1:=a_1$ and $\eta_1:= \prod_{k=0}^{N-1}b_{\tau^k(1)}$,
we obtain:
$$
\prod_{\ell=1}^N \widehat{a_\ell}\widehat{b_\ell}\widehat{c_\ell}^{-1}\widehat{b_\ell}^{-1} \equiv \,
[\widehat{\x_1}, \widehat{\eta_1}]  \quad (\!\!\!\! \mod [F,R]) \, ,
$$
where the equivalence follows from Lemma \ref{=modfr}, (iii).
Since the left hand side of the above equivalence 
is in $R$, it follows that $[\x_1, \eta_1]=1$.

The general case, where $\tau$ is a product of cycles of length $<N$, follows by induction.

\end{proof}

To complete the proof of Proposition \ref{2nd step}, we need to know how $ev(\hat{v})$ changes
under the action of the mapping class group, modulo $[F,R]$. Notice in fact  that $ev(\hat{v})$ is $Map(g',d)$-invariant only modulo $R_\Gamma$. 

\begin{lemma}\label{2nd step l3}
Let $v\in HS(G;g',d)$ and let $\varphi \in Map(g',d)$. Then we have:
\begin{itemize}
\item[(i)]
$ev(\widehat{\varphi \cdot v})=ev(\hat{v})$, if $(\varphi \cdot v)_i=v_i$, $\forall i=1, \dots , d$;
\item[(ii)]
$ev(\widehat{\varphi \cdot v})\equiv ev(\hat{v})\left( \widehat{v_{i+1}}^{-1}  \widehat{v_{i}}^{-1}  \widehat{v_{i+1}} \widehat{v_{i+1}^{-1}v_iv_{i+1}}\right)
\quad (\!\!\!\! \mod [F,R])$, if $\varphi$ is the half-twist $\s_i(v_i,v_{i+1})=(v_{i+1}, v_{i+1}^{-1}v_iv_{i+1})$;
\item[(iii)]
$ev(\widehat{\varphi \cdot v})\equiv ev(\hat{v}) \left(  \widehat{g}\widehat{v_d}^{-1}\widehat{g}^{-1}\widehat{gv_dg^{-1}}  \right) \:
(\!\!\!\! \mod [F,R])$,
if $\varphi$ is as in  \eqref{CLP} (Proposition 6.2 i) of \cite{CLP12} with $\ell=1$)
and then $g=v_dv_{d+1}v_{d+2}v_{d+1}\inv$.
\end{itemize}
\end{lemma}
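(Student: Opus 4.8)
The plan is to verify all three formulas by computing, for each of the three elementary mapping classes, the difference $ev(\hat v)^{-1}\,ev(\widehat{\varphi\cdot v})$ directly inside the free group $F$. The common mechanism is the following: each move alters only two or three consecutive entries of $v$, so after cancelling the unchanged prefix the element $ev(\hat v)^{-1}\,ev(\widehat{\varphi\cdot v})$ takes the shape $W^{-1}D\,W$, where $W\in F$ is the unchanged tail of the evaluation word lying to the right of the affected entries and $D\in F$ is a short local defect built only from the involved generators. Since $\varphi$ is a mapping class it preserves the total product, so $ev(\widehat{\varphi\cdot v})$ and $ev(\hat v)$ have the same image in $G$; hence $W^{-1}D\,W\in R$, and as $R\unlhd F$ this forces $D\in R$. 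Now, for any $r\in R$ one has $\hat y^{-1}r\hat y\equiv r \pmod{[F,R]}$ (the basic congruence already exploited in the proof of Lemma \ref{=modfr}), so conjugation by the tail $W$ is invisible modulo $[F,R]$ and we get $ev(\hat v)^{-1}\,ev(\widehat{\varphi\cdot v})\equiv D \pmod{[F,R]}$. The task then reduces to identifying $D$ in each case and matching it with the stated right-hand factor.

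For (ii) this is immediate. Writing $ev(\hat v)=P\,\widehat{v_i}\,\widehat{v_{i+1}}\,Q$, with $P$ the part of the evaluation word before position $i$ and $Q$ the part after position $i+1$, the half-twist $\sigma_i$ replaces $(\widehat{v_i},\widehat{v_{i+1}})$ by $(\widehat{v_{i+1}},\widehat{v_{i+1}^{-1}v_iv_{i+1}})$ and leaves $P,Q$ untouched, so
\begin{equation*}
ev(\hat v)^{-1}\,ev(\widehat{\sigma_i\cdot v})=Q^{-1}\Bigl(\widehat{v_{i+1}}^{-1}\widehat{v_i}^{-1}\widehat{v_{i+1}}\,\widehat{v_{i+1}^{-1}v_iv_{i+1}}\Bigr)Q .
\end{equation*}
The bracketed word is exactly the asserted factor $D$; evaluating it in $G$ gives $v_{i+1}^{-1}v_i^{-1}v_{i+1}\cdot v_{i+1}^{-1}v_iv_{i+1}=1$, so $D\in R$ and the conjugation by $Q$ drops out modulo $[F,R]$, proving (ii).

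For (iii) the same bookkeeping applies to the move \eqref{CLP}, which fixes every entry except $\widehat{v_d}\mapsto\widehat{g v_d g^{-1}}$ and $\widehat{v_{d+1}}\mapsto\widehat{v_d v_{d+1}}$, with $g=v_dv_{d+1}v_{d+2}v_{d+1}^{-1}$. The crucial difference is that the affected region $\widehat{v_d}\,[\widehat{v_{d+1}},\widehat{v_{d+2}}]$ straddles the branch/handle boundary, so after cancelling prefix and tail one is left, up to conjugation by the tail, with the local defect
\begin{equation*}
E:=[\widehat{v_{d+1}},\widehat{v_{d+2}}]^{-1}\,\widehat{v_d}^{-1}\,\widehat{g v_d g^{-1}}\,[\widehat{v_dv_{d+1}},\widehat{v_{d+2}}]\in R .
\end{equation*}
The genuine work is to simplify $E$ modulo $[F,R]$ into the stated factor $D=\widehat{g}\,\widehat{v_d}^{-1}\,\widehat{g}^{-1}\,\widehat{g v_d g^{-1}}$: one expands the two commutators, uses $\widehat{v_dv_{d+1}}\equiv\widehat{v_d}\,\widehat{v_{d+1}}\pmod{R}$ together with the conjugation-triviality of $R$-elements modulo $[F,R]$, and invokes the rewriting identities of Lemma \ref{=modfr} to absorb the $v_{d+2}$-conjugations. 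I expect this commutator rearrangement to be the main obstacle: unlike (ii), the defect mixes a true commutator with the tautological-lift discrepancies $\widehat{xy}\neq\widehat{x}\,\widehat{y}$, and these must be tracked carefully—exactly in the style of Lemma \ref{=modfr}(iii)—so as to land precisely on $D$ and not merely on a word congruent to it modulo $R_\Gamma$. Finally, (i) is the degenerate base case of the scheme: when $\varphi$ fixes all of $v_1,\dots,v_d$ it acts only on the handle entries and may be taken in the subgroup generated by the genus moves 2.1--2.5 of \cite{Zimmer} used in Lemma \ref{zimmer+}; a direct check on each such generator shows that it leaves the evaluation word $ev(\hat v)$ unchanged in $F$, whence the stated exact equality.
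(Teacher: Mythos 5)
Your overall mechanism --- the change in the evaluation word is a local defect $D$ conjugated by the untouched tail $W$, $D\in R$ because the two evaluations agree in $G$, and conjugation of elements of $R$ is invisible modulo $[F,R]$ --- is sound, and it does settle (ii) completely. But it is not the paper's route, and at the two places where the routes diverge your write-up has real gaps. The paper works throughout in $F/[F,R]$, where $R/[F,R]$ is central: it compares the tautological lift of the transformed vector, $\widehat{\varphi\cdot v}$, with the transformed tautological lift, $\varphi\cdot\hat v$; the two differ in each slot by an element of the central subgroup $R/[F,R]$, these corrections cancel inside the commutators and survive only in the linearly occurring branch entries, while $ev(\varphi\cdot\hat v)=ev(\hat v)$ is the formal invariance of the evaluation under the moves. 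This disposes of (i) at once and reduces (iii) to the single observation that $(\widehat{\varphi\cdot v})_d=\widehat{gv_dg^{-1}}=(\varphi\cdot\hat v)_d\cdot\bigl(\widehat{g}\,\widehat{v_d}^{-1}\widehat{g}^{-1}\widehat{gv_dg^{-1}}\bigr)$, the second factor being central and hence movable to the end.

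The concrete gaps are these. For (iii) you stop exactly at what you yourself call ``the genuine work'': proving $E\equiv D\pmod{[F,R]}$. This does close, but the clean way is the centrality argument rather than Lemma \ref{=modfr}: in $F/[F,R]$ write $\widehat{v_dv_{d+1}}=\widehat{v_d}\widehat{v_{d+1}}z_1$, $\widehat{g}=\widehat{v_d}\widehat{v_{d+1}}\widehat{v_{d+2}}\widehat{v_{d+1}}^{-1}z_2$ and $\widehat{gv_dg^{-1}}=\widehat{g}\widehat{v_d}\widehat{g}^{-1}D$ with $z_1,z_2,D$ central; the $z_i$ drop out of the commutator and of the conjugation, $D$ commutes to the end, and what remains of $E$ is the word $[B,C]^{-1}A^{-1}(wAw^{-1})[AB,C]$ with $A=\widehat{v_d}$, $B=\widehat{v_{d+1}}$, $C=\widehat{v_{d+2}}$, $w=ABCB^{-1}$, which is identically $1$ in the free group; hence $E\equiv D$. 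Until this (or an equivalent computation) is carried out, (iii) is unproved. For (i) there are two objections: an arbitrary $\varphi$ with $(\varphi\cdot v)_i=v_i$ for $i\le d$ need not be a product of Zimmermann's genus moves (that hypothesis constrains $\varphi$ only on this particular $v$), and the genus moves do \emph{not} leave $ev(\hat v)$ unchanged in $F$ --- for instance $\widehat{a_1b_1}$ is a single free generator, not the product $\widehat{a_1}\widehat{b_1}$, so the two evaluation words are distinct elements of $F$ and agree only in $F/[F,R]$, again by centrality of $R/[F,R]$. Since everything downstream uses the lemma only modulo $[F,R]$, that is the statement you should prove; the claimed exact equality in $F$ is both false and unnecessary.
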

\begin{proof}
The proof is similar to that of Proposition 3.6  of \cite{CLP12} (invariance of the $\e$-invariant under $Map(g',d)$).

Let $\alpha \colon \frac{F}{[F,R]}\to G$ be the morphism  $\hat{g}\mapsto g$. Then $\ker (\alpha)=\frac{R}{[F,R]}$
is central in $\frac{F}{[F,R]}$.
We have that 
$$
\a ((\widehat{\varphi \cdot v})_i)=({\varphi \cdot v})_i = \a ((\varphi \cdot \hat{v})_i) \, , \quad \forall i=1, \dots , d+2g' \, .
$$
Hence, there are $\z_i \in \frac{R}{[F,R]}$ such that 
$$
(\widehat{\varphi \cdot v})_i = (\varphi \cdot \hat{v})_i\cdot \z_i \, , \quad \forall i=1, \dots , d+2g' \, .
$$
Since $\frac{R}{[F,R]} \leq \frac{F}{[F,R]}$ is central, we can replace $(\widehat{\varphi \cdot v})_i$ with 
$(\varphi \cdot \hat{v})_i$, $\forall i=d+1, \dots , d+2g'$, in the expression for $ev(\widehat{\varphi \cdot v})$.
This is enough to prove the claim if  $\varphi$ acts as the identity on $v_i$, $i=1, \dots , d$. Indeed, in this case, we have: 
$$
ev(\widehat{\varphi \cdot v}) = ev ( {\varphi \cdot \hat{v}} ) = ev(\hat{v}) \, ,
$$
by the fact that evaluation is invariant under mapping classes.  This proves (i).

(ii) follows by a direct computation.

Finally, consider the case where $\varphi$ is as defined in 
\eqref{CLP} (Proposition 6.2 i) of \cite{CLP12} with $\ell=1$).
We have that $(\varphi \cdot v)_d=gv_dg^{-1}$ and 
$(\varphi \cdot v)_i=v_i$, $i=1, \dots , d-1$, where $g\in G$
is given above.
Let $x \in \frac{F}{[F,R]}$ such that 
$(\varphi \cdot \hat{v})_d = x\widehat{v_d}x^{-1}$. Since $\a(x)=g$, there is 
$\eta \in \frac{R}{[F,R]}$ -- hence central --
such that $x=\widehat{g}\eta$. It follows that
\begin{eqnarray*}
(\varphi \cdot \hat{v})_d = (\widehat{g}\eta)\widehat{v_d}(\widehat{g}\eta)^{-1}=\widehat{g}\widehat{v_d}\widehat{g}^{-1} \,  .
\end{eqnarray*}
Now notice that 
\begin{eqnarray*}
(\widehat{\varphi \cdot v})_d &=& \widehat{gv_dg^{-1}} \\
& = & \widehat{g}\widehat{v_d}\widehat{g}^{-1} \left( \widehat{g}\widehat{v_d}^{-1}\widehat{g}^{-1}\widehat{gv_dg^{-1}} \right) \\
&=&  (\varphi \cdot \hat{v})_d \left( \widehat{g}\widehat{v_d}^{-1}\widehat{g}^{-1}\widehat{gv_dg^{-1}} \right) \,  ,
\end{eqnarray*}
from which the claim follows.
\end{proof}

\bigskip

\noindent \textit{Proof of Proposition \ref{2nd step}.}
Let $M\in \NN$, $\x_m \in \Gamma$ and $\eta_m \in G$  be as in Lemma \ref{2ndstepl1}.  Then
\begin{equation}\label{2nd step eq1}
ev(\hat{w}) \equiv ev(\hat{v})\prod_{m=1}^M [ \widehat{\x_m}, \widehat{\eta_m}] \quad    (\!\!\!\! \mod [F,R]) \, .
\end{equation}
Since $\x_1\in \Gamma$, there exists $k \in \{1, \dots , d\}$ such that $v_k$ is conjugate to $\x_1$. 
Without loss of generality, assume $k=d$. 
Arguing as in  Proposition \ref{1st step}, there exists $\varphi'_1\in Map(g'+1,d)$  such that
\begin{eqnarray}\label{2nd step eq 2}
&& ev(\widehat{\varphi'_1 \cdot w^1}) \equiv ev(\widehat{\varphi'_1 \cdot v^1})\prod_{m=1}^M [ \widehat{\x_m}, \widehat{\eta_m}] \quad    (\!\!\!\! \mod [F,R]) \, , \\
&&(\varphi'_1 \cdot w^1)_i=(\varphi'_1 \cdot v^1)_i \, , \quad \forall i=1, \dots , d \, ,  \nonumber \\
&& (\varphi'_1 \cdot w^1)_d=\x_1 \, . \nonumber
\end{eqnarray}
Equation  \eqref{2nd step eq 2} holds true because 
of \eqref{2nd step eq1}, Lemma \ref{2nd step l3} and Lemma \ref{zimmer+}. 
Moreover there exists also $\varphi_1\in Map(g'+2,d)$  such that
$\varphi_1 \cdot v^2 = (\varphi'_1 \cdot v^1)^1$, the vector obtained by adding one handle with 
trivial monodromies to $\varphi_1 \cdot v^1$.

By the argument of the proof of
Lemma \ref{zimmer+}, we have that this vector is 
$Map(g'+2,d)$-equivalent to
$$
 (v_1, \dots , v_{d-1}, \x_1 ; \eta_1 , 1, 
 (\varphi_1'\cdot v^1)_{d+1}, (\varphi_1'\cdot v^1)_{d+2},
 v_{d+1}, \dots , v_{d+2g'}) \, .
$$
using only transformations as in Lemma \ref{2nd step l3} i).
Here we use the assumption that the $v_j$, $j>d$, generate $G$.

Using the automorphisms
of \cite[2.1.a) and b)]{Zimmer}  the vector is further equivalent to
$$
 (v_1, \dots , v_{d-1}, \x_1 ; 1, \eta_1 , 
 (\varphi_1'\cdot v^1)_{d+1}, (\varphi_1'\cdot v^1)_{d+2},
 v_{d+1}, \dots , v_{d+2g'}) \, .
$$
Now, by \eqref{CLP} (Proposition 6.2 (i) with $\ell=1$ of \cite{CLP12})  there exists $\varphi_2 \in Map(g'+2,d)$ such that
\begin{eqnarray*}
&& \varphi_2 \cdot (\varphi_1 \cdot v^2)\\  
& = & 
(v_1, \dots , v_{d-1}, \x_1\eta_1\x_1\eta_1\inv\x_1\inv ; \x_1, \eta_1 , 
 (\varphi_1'\cdot v^1)_{d+1}, (\varphi_1'\cdot v^1)_{d+2},
 v_{d+1}, \dots , v_{d+2g'}) \, \\
& = & 
(v_1, \dots , v_{d-1}, \x_1 ; \x_1, \eta_1 , 
 (\varphi_1'\cdot v^1)_{d+1}, (\varphi_1'\cdot v^1)_{d+2},
 v_{d+1}, \dots , v_{d+2g'}) \, ,
\end{eqnarray*}
since $[\x_1, \eta_1]=1$ according to Lemma \ref{2ndstepl1}.
The same property implies
$ev(\widehat{\!\varphi_2 \! \cdot \!\varphi_1 \!\! \cdot \! v^2\!})= ev(\widehat{\varphi_1 \cdot v^2})[\widehat{\x_1}, \widehat{\eta_1}]$.

This proves the desired assertion  if $M=1$. The general case, where  $M>1$, is proven  inductively along the same lines.

\qed

%%%%%%%%%%%%%%%%%%%%%%%%%%

\medskip
\noindent {\bf Authors' Addresses:}\\

\noindent Fabrizio Catanese,\\ 
 Lehrstuhl Mathematik VIII,  Mathematisches
Institut der Universit\"at
Bayreuth\\ NW II,  Universit\"atsstr. 30,  95447 Bayreuth (Germany).\\

Michael L\"onne, \\
Institut f\"ur Algebraische Geometrie, Gottfried Wilhelm Leibniz Universit\"at Hannover\\
Welfengarten 1, 30167 Hannover (Germany).\\

Fabio Perroni, \\
SISSA - International School for Advanced Studies, School of Mathematics\\
 via Bonomea, 265 - 34136 Trieste (Italy).\\

          email:                 
        fabrizio.catanese@uni-bayreuth.de,
loenne@math.uni-hannover.de, 
perroni@sissa.it

\end{document}